\documentclass[11pt,a4paper]{article}
\usepackage{tikz}
\usepackage{subfigure}
\usepackage[english]{babel}
\usepackage{graphicx}
\usepackage{extarrows}
\usepackage{color}
\usepackage{indentfirst, latexsym, bbm, bm, amssymb}
\usepackage{hyperref}
\usepackage{authblk}
\usepackage[top=2.5cm,bottom=2.5cm,left=2.2cm,right=2.2cm]{geometry}
\usepackage{graphicx}
\hypersetup
{
colorlinks=true,
linkcolor=blue,
filecolor=blue,
urlcolor=blue,
citecolor=cyan,
}

\setlength{\textwidth}{150mm} \setlength{\oddsidemargin}{7mm}
\setlength{\evensidemargin}{7mm} \setlength{\topmargin}{-5mm}
\setlength{\textheight}{245mm} \topmargin -18mm
\baselineskip 15pt

\setlength{\affilsep}{0.5em}     

\newtheorem{theorem}{Theorem}[section]
\newtheorem{corollary}[theorem]{Corollary}

\newtheorem{conjecture}[theorem]{Conjecture}

\newtheorem{lemma}[theorem]{Lemma}

\newtheorem{claim}{Claim}
\newenvironment{proof}{\noindent {\bf Proof.}}{\hfill \rule{3mm}{3mm}\par\medskip}

\allowdisplaybreaks[4]

\begin{document}
\title{Stability results for Berge-matching in hypergraphs
\thanks{Jia-Bao Yang was supported by National Key R\&D Program of China under grant number 2024YFA1013900, NSFC under grant number 12471327. Leilei Zhang was  supported by JSPS KAKENHI Grant Number 25KF0036, the NSF of Hubei Province Grant Number 2025AFB309,  the China Postdoctoral Science Foundation  Grant Number 2025M773113, the Fundamental Research Funds for the Central Universities, Central China Normal University Grant Number CCNU24XJ026.}}

\author[1]{Jia-Bao Yang\footnote{Email: \texttt{jbyang1215@163.com}}}
\author[2]{Leilei Zhang\footnote{Corresponding author. Email: \texttt{mathdzhang@163.com}}}

\date{}

\affil[1]{\small School of Mathematics, Nanjing University, Nanjing, 210093, China }
\affil[2]{\small Faculty of Environment and Information Sciences, Yokohama National University, Yokohama 240-8501, Japan}

\maketitle

\begin{abstract}
Given a graph $F$, a hypergraph is called a \text{Berge-$F$} if it can be obtained by expanding each edge of \( F \) into a hyperedge containing it.  Let $M_{k}$ denote the matching of size $k$.  Kang, Ni, and Shan \cite{Kang2022} determined the Turán number of \text{Berge-$M_k$}.   Our main result shows that if an $r$-uniform hypergraph $\mathcal{H}$ on $n$ vertices has nearly as many edges as the extremal in their theorem without containing \text{Berge-$M_k$}, then $\mathcal{H}$ must be structurally close to certain well-specified graphs. Meanwhile, our result also implies several stability results, such as the stability version of the well-known Erd\H{o}s-Gallai theorem (Erd\H{o}s and Gallai, 1959 \cite{Erdos1959}).
\end{abstract}

{{\bf Keywords:} Tur\'an number; matching; Berge hypergraph; stability.}

\section{Introduction}
A hypergraph $\mathcal{H}=(V,E)$ consists of a set $V$ of vertices and a set $E$ of hyperedges.
An $r$-uniform hypergraph, or simply an $r$-graph, is a collection of $r$-subsets of a finite set. 
Given $r$-graphs $\mathcal{H}$ and $\mathcal{F}$, we say that $\mathcal{H}$ is $\it\mathcal{F} \text{-} free$ if it contains no copy of $\mathcal{F}$ as a subgraph.
The {\it Tur\'{a}n number} of $\mathcal{F}$, denoted by $\mbox{ex}_r(n,\mathcal{F})$, is the
maximum number of hyperedges in an $\mathcal{F}$-free $r$-graph on $n$ vertices. 
We say that $\mathcal{H}$ is $\it extremal$ if $\mathcal{H}$ is an $n$-vertex $\mathcal{F}$-free $r$-graph with exactly $\mbox{ex}_r(n,\mathcal{F})$ hyperedges.

The study of the behavior of $\mbox{ex}_{r}(n,\mathcal{F})$ and $\mbox{EX}_{r}(n,\mathcal{F})$ is a central topics in extremal graph theory. 
In 1941, Tur\'an proved that $\mbox{ex}_{2}(n,K_{p+1})=T(n,p)$, 
where $K_{p+1}$ denotes the complete graph of order $p+1$ for $p\geq 2$, 
and $T(n,p)$ denotes the balanced $p$-partite Tur\'an graph on $n$ vertices.
The Erd\H{o}s-Stone-Simonovits Theorem states that for any graph $G$ with chromatic number $\chi(G)= k + 1 \geq 2$, 
we have $\mbox{ex}_{2}(n,G) =(1 - 1/k + o(1))(n^2/2)$.
It is clear that the Erd\H{o}s-Stone-Simonovits theorem gives the asymptotic values of the Tur\'an numbers of all graphs with chromatic number at least three.
Thus, determining the exact Tur\'an numbers of non-bipartite graphs remains a challenging problem.

Denote by $M_{k}^r$ the matching of size $k$ in $r$-graph. 
For simplicity, we often write $M_{k}^2$ as $M_{k}$. 
In \cite{Erdos1959}, Erd\H{o}s and Gallai proved a famous result on the Tur\'an number of matchings as follows.

\begin{theorem}[Erd\H{o}s and Gallai \cite{Erdos1959}] \label{Erdos-r=2}
Let $n$ and $k$ be integers with $n\geq 2k+1$. Then 
$$\emph{ex}_2(n,M_{k+1})\leq \max\left\{\binom{2k+1}{2},\binom{k+1}{2}+(n-k-1)k\right\}.$$
\end{theorem}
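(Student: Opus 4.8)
\medskip\noindent\textbf{Proof strategy.} The plan is to deduce the bound from the Tutte--Berge formula by a short convexity computation, without any case analysis. Recall that this formula states
\[
\nu(G)=\frac12\Bigl(n-\max_{S\subseteq V(G)}\bigl(o(G-S)-|S|\bigr)\Bigr),
\]
where $\nu(G)$ is the matching number of $G$ and $o(H)$ is the number of components of $H$ with an odd number of vertices. If $G$ is an $n$-vertex $M_{k+1}$-free graph then $\nu(G)\le k$, so there is a set $S\subseteq V(G)$, say with $|S|=m$, for which $o(G-S)-m\ge n-2k$; in particular $o(G-S)\ge m+(n-2k)$. Since the components of $G-S$ are pairwise disjoint and cover the $n-m$ vertices outside $S$, we obtain $m+(n-2k)\le o(G-S)\le n-m$, and hence $m\le k$.

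The first step is to bound $e(G)$, the number of edges of $G$, by a function of $m$ alone. Every edge of $G$ lies in $S$, joins $S$ to $V(G)\setminus S$, or lies inside a single component of $G-S$ (as $G-S$ has no edges between distinct components); writing $K$ for a generic component of $G-S$, this yields
\[
e(G)\ \le\ \binom{m}{2}+m(n-m)+\sum_{K}\binom{|K|}{2}.
\]
The graph $G-S$ has at least $c_0:=m+(n-2k)$ components, of total size $n-m$, and $\binom{\cdot}{2}$ is convex, so $\sum_K\binom{|K|}{2}$ is at most the value obtained from one component of size $(n-m)-(c_0-1)$ and $c_0-1$ singletons, that is, $\sum_K\binom{|K|}{2}\le\binom{(n-m)-c_0+1}{2}=\binom{2(k-m)+1}{2}$. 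Therefore
\[
e(G)\ \le\ f(m)\ :=\ \binom{m}{2}+m(n-m)+\binom{2(k-m)+1}{2},\qquad 0\le m\le k.
\]

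To conclude, I would observe that $f$ is convex on $[0,k]$: the part $\binom{m}{2}+m(n-m)=mn-\frac{m^2+m}{2}$ has second derivative $-1$ in $m$, while $\binom{2(k-m)+1}{2}$ has second derivative $+4$, for a net $+3>0$. A convex function on an interval attains its maximum at an endpoint, and a direct evaluation gives $f(0)=\binom{2k+1}{2}$ and $f(k)=\binom{k}{2}+k(n-k)=\binom{k+1}{2}+(n-k-1)k$. Since $m\in[0,k]$, it follows that $e(G)\le f(m)\le\max\{f(0),f(k)\}$, which is exactly the asserted bound.

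I do not anticipate a serious obstacle; the argument is essentially bookkeeping once the Tutte--Berge formula is in hand. The one point needing care is the middle step --- counting the components of $G-S$ that are forced and tracking the vertex budget so as to land precisely on $\binom{2(k-m)+1}{2}$, together with recording $m\le k$ so that $f$ need only be examined on $[0,k]$ (outside that range the two endpoint values need not be the largest). A self-contained alternative avoiding the Tutte--Berge formula is induction on $k$: the case $\nu(G)<k$ follows at once from the inductive hypothesis, and when $\nu(G)=k$ one uses the Gallai--Edmonds decomposition to separate the case of a vertex lying in every maximum matching (delete it, apply induction, and add back its at most $n-1$ incident edges to reach the second bound) from the case of a disjoint union of small factor-critical graphs (which produces the first bound); this also works, but recombining the cases requires more delicate arithmetic than the convexity step above.
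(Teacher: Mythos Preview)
Your argument is correct. The Tutte--Berge step, the convexity bound on $\sum_K\binom{|K|}{2}$ (relaxing the ``odd components'' constraint to ``at least $c_0$ components'' only weakens the bound, and the optimum configuration of one large component plus singletons happens to satisfy the parity constraint anyway), and the endpoint evaluation are all clean.

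As for comparison: the paper does not give a standalone proof of Theorem~\ref{Erdos-r=2}; it is simply cited as the classical Erd\H{o}s--Gallai result. That said, your proof is exactly the $r=2$ specialization of the paper's general machinery. The function you call $f(m)$ is the paper's $h_2(n,k,m)$ (check: $\binom{m}{2}+m(n-m)+\binom{2k-2m+1}{2}=\binom{2k+1-m}{2}+(n-2k-1+m)m$, since the first three terms count edges of $K_m+K_{2k+1-2m}=K_{2k+1-m}$), and your two steps---merging components by convexity of $\binom{\cdot}{2}$, then maximizing over $m\in[0,k]$ by convexity of $f$---correspond precisely to Lemma~\ref{mainlemma} (reducing $H(n,k,t,\vec c)$ to $H(n,k,t)$) followed by the convexity of $h_r(n,k,t)$ used at the end of the proof of Theorem~\ref{stabilitytheorem-for-bergematching}. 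Your direct computation is more economical for $r=2$; the paper's shifting argument in Lemma~\ref{mainlemma} is built to handle general $r$ and the red--blue weighting $g_r$, which is why it is more elaborate.
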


As a significant extension of the Erd\H{o}s--Gallai theorem, Wang \cite{Wang2020} generalized this result to $s$-cliques. 
Later, Duan, Ning, Peng, Wang, and Yang \cite{Duan2020} determined the maximum number of $s$-cliques in graphs with matching number at most $k$ and with a given minimum degree. 
They also proved two stability results for their theorem in the case $s=2$. 
In particular, they showed that if a graph $G$ has a number of edges close to the maximum value given by their theorem, then $G$ must be a subgraph of a well-specified family of graphs. 
Very recently, using the Gallai--Edmonds theorem, Yang and Yuan \cite{Yang2024} completely resolved the stability problem for the number of cliques in graphs with a given matching number and minimum degree.

In the setting of hypergraphs, Erd\H{o}s \cite{Erdos1965} proposed a famous conjecture on matchings, which remains one of the central open Tur\'an problems in hypergraph theory.
\begin{conjecture}[Erd\H{o}s \cite{Erdos1965}]\label{Erdos-conjecture}
For integers $n\geq rk$, $r\geq 2$ and $k\geq 1$,
$$\emph{ex}_r(n,M_{k+1}^r)=\max\left\{\binom{r(k+1)-1}{r},\binom{n}{r}-\binom{n-k}{r}\right\}.$$
\end{conjecture}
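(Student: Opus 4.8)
The displayed statement is \emph{Erd\H{o}s's matching conjecture}, one of the oldest open problems in extremal set theory and still unproven in general, so the plan below is necessarily a programme rather than a complete proof. The standard opening move is \emph{compression}: given an $M_{k+1}^r$-free $r$-graph $\mathcal{H}$ on $n$ vertices, I would replace it by a left-compressed (shifted) $r$-graph on the same vertex set with at least as many edges and matching number still at most $k$, applying repeatedly the operations $S_{ij}$ that swap a larger coordinate $j$ for a smaller one $i$ in an edge whenever the result is not already present. A theorem of Frankl guarantees that shifting does not increase the matching number, so one may assume $\mathcal{H}$ is shifted. For a shifted family one then tries to locate a bounded \emph{kernel} — ideally a $k$-element set meeting every edge — which would force $\mathcal{H}$ into the star-type configuration $\binom{[n]}{r}\setminus\binom{[n]\setminus[k]}{r}$.

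I would next split the analysis according to the size of $n$ relative to $r(k+1)$. In the large range, where the star-type family is the larger of the two candidates, the aim is to rule out a shifted $M_{k+1}^r$-free family with more than $\binom{n}{r}-\binom{n-k}{r}$ edges; the cleanest tools are Frankl's delta-system / kernel method, or a probabilistic/fractional argument — assign each edge the probability that it is hit by a uniformly random transversal of a near-maximum matching, or run the permutation method over random orderings of the ground set, and compare the resulting bound with the two explicit candidates. In the small range, where the clique $\binom{[r(k+1)-1]}{r}$ dominates, I would induct on $k$: show that a shifted $M_{k+1}^r$-free family cannot activate more than $r(k+1)-1$ vertices without creating $k+1$ pairwise disjoint edges, by peeling off one edge of a maximum matching and applying the inductive hypothesis to the remainder. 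The $r=2$ instance of this scheme is exactly the Erd\H{o}s--Gallai theorem (Theorem~\ref{Erdos-r=2}).

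The genuinely hard part — and the reason the conjecture remains open — is the \emph{intermediate range} of $n$, roughly from $r(k+1)$ up to the threshold $(2k+1)r-k$; Frankl's theorem covers $n\ge (2k+1)r-k$ and small uniformities have been settled by ad hoc arguments, but in between the two extremal configurations compete and shifting by itself does not determine the structure. Closing this gap appears to require genuinely new input: either a much finer stability analysis of shifted families controlling exactly how edges can escape a $k$-element kernel, or a sharp fractional relaxation whose optimum rounds to one of the two integral candidates, or a decomposition tailored to the intermediate regime. Absent such an ingredient, the programme recovers only the known partial results together with the asymptotic statement $\mathrm{ex}_r(n,M_{k+1}^r)=(1+o(1))\max\{\binom{r(k+1)-1}{r},\binom{n}{r}-\binom{n-k}{r}\}$.
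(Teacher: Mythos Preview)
The statement is labelled a \emph{conjecture} in the paper for good reason: the authors do not prove it, nor do they attempt to; they cite it as one of the central open Tur\'an problems and then list partial progress (Theorem~\ref{Erdos-r=2} for $r=2$, Frankl's resolution of $r=3$, Frankl--Kupavskii's stability for large $n$). So there is no ``paper's own proof'' to compare against.

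Your write-up is therefore the right kind of document: you correctly flag the problem as open, you outline the standard shifting programme that underlies all known partial results, and you locate the obstruction in the intermediate range of $n$ where neither extremal configuration dominates cleanly. That diagnosis is accurate and matches the state of the art. The only caution is terminological: Frankl's bound is usually stated as $n\ge (2k+1)r-k$ (or the slightly sharper variants of Frankl--Kupavskii), and your threshold description is consistent with that, but be aware that the exact crossover between the clique and star constructions is at $n$ roughly $r(k+1)$, not $rk$ as in the conjecture's hypothesis, so the ``small range'' you describe is essentially a single value of $n$ and the real content is entirely in what you call the intermediate and large ranges.
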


For this topic, Theorem~\ref{Erdos-r=2} implies that the conjecture is settled for $r=2$.
After that, \L uczak and Mieczkowska \cite{Luczak2014} proved that the Erd\H{o}s Matching Conjecture holds for $r=3$ and sufficiently large $n$.
The case $r=3$ was later completely resolved by Frankl \cite{Frankl2017}.
Recently, Frankl and Kupavskii \cite{Frankl2019} obtained a stability result for the Erd\H{o}s Matching Conjecture 
when $r \ge 3$ and either $n \ge r(k+\max\{25,2k+2\})$ or $n \ge (2+o(1))rk$.
For other results on the Erd\H{o}s Matching Conjecture, we refer the reader to 
\cite{Alon2012,Bollobas1976,Frankl2022,Frankl2019,Frankl2012}.
The exact value of the Tur\'an number of hypergraph matchings remains unknown in general.

To state other results, we first recall a definition given by Gerbner and Palmer \cite{Gerbner2017}.
Let $F$ be a graph. A hypergraph $\mathcal{H}$ is called a \emph{Berge-$F$} 
if there exists a bijection $\phi: E(F) \to E(\mathcal{H})$ such that $e \subseteq \phi(e)$ for each $e \in E(F)$.
It is worth noting that Berge-$F$ is actually a family of hypergraphs.
We say that a hypergraph $\mathcal{H}$ is \emph{Berge-$F$-free} if it contains no subgraph isomorphic to any Berge-$F$.
By definition, $\mathrm{ex}_r(n,\text{Berge-}F)$ is the maximum number of hyperedges in an $n$-vertex Berge-$F$-free $r$-graph.

Given two $r$-graphs $\mathcal{H}_1$ and $\mathcal{H}_2$ with disjoint vertex sets, 
let $\mathcal{H}_1 \cup \mathcal{H}_2$ be the $r$-graph with vertex set $V(\mathcal{H}_1) \cup V(\mathcal{H}_2)$ and edge set $E(\mathcal{H}_1) \cup E(\mathcal{H}_2)$. 
For a positive integer $k$, we use $k\mathcal{H}$ to denote $\bigcup_{i=1}^k \mathcal{H}$.
Denote by $\mathcal{H}_1+\mathcal{H}_2$ the graph obtained from $\mathcal{H}_1\cup \mathcal{H}_2$ 
by adding every $r$-subset of $V(\mathcal{H}_1)\cup V(\mathcal{H}_2)$ 
that has nonempty intersection with both $V(\mathcal{H}_1)$ and $V(\mathcal{H}_2)$.

For $\vec{c}=(c_1,\ldots,c_m)$ with odd numbers satisfying 
$c_1 \geq c_2 \geq \cdots \geq c_m \geq 1$, 
let $\alpha(\vec{c})=\sum_{i=1}^{m} c_i$ and $\beta(\vec{c})=\sum_{i=1}^{m} \lfloor c_i/2 \rfloor$. 
Define $K(\vec{c})$ to be the graph consisting of vertex-disjoint cliques of orders 
$c_1,\ldots,c_m$. 
Denote by $H(n,k,t,\vec{c})$ the $n$-vertex graph $K_t + K(\vec{c})$, 
where 
$t+\alpha(\vec{c}) = n$ and $t+\beta(\vec{c}) = k$ (see Figure~\ref{fig:1}). 
For convenience, let 
$H(n,k,t) = H\bigl(n,k,t,(2k-2t+1,1,\ldots,1)\bigr)$. 
Let $N(K_r,G)$ denote the number of $r$-cliques in a graph $G$. 
Define 
\[
h_r(n,k,t)=N(K_r,H(n,k,t))
= {2k+1-t \choose r} + (n-2k-1+t){t \choose r-1},
\]
and 
\[
f_r(n,k,t)
= {2k+1-t \choose r} + (n-2k-1+t)t .
\]

\begin{figure}
\begin{center}
    \begin{tikzpicture}[scale = 0.5]
    \filldraw[fill=gray] (2,0) ellipse (2.25 and 0.5);
    \filldraw[fill=gray] (7,0) ellipse (1.75 and 0.5);
    \filldraw[fill=gray] (6,4) ellipse (2.5 and 0.5);
     \foreach \xa / \xb  in { 1 / 4, 2 / 4, 3 / 4, 4 / 4,   6 / 4, 7 / 4,  9 / 4, 10 / 4    2.5/ 7, 3.5 / 7, 4.5 / 7}
       \foreach \xa / \xb in {4.5/ 4, 6 / 4, 7.5 / 4}
        \foreach \ya / \yb in {0 / 0, 1 / 0, 2 / 0, 3 / 0, 4 / 0,   6 / 0, 7 / 0, 8 /0,   10 / 0, 12 / 0 }
        {\draw (\xa,\xb) -- (\ya,\yb) ;}
    \draw node at (-3,0){$K(\vec{c})$};
    \draw node at (-3,4){$K_t$};
    \foreach \ya / \yb in {0/ 0, 1 / 0, 2 / 0, 3 / 0, 4 / 0,   6 / 0, 7 / 0, 8 /0,   10 / 0, 12 / 0 }
    {\draw (\ya,\yb) circle (2pt);}
     \foreach \xa / \xb in {4.5/ 4, 6 / 4, 7.5 / 4}
       {\draw (\xa,\xb) circle (2pt);}
    \end{tikzpicture}
    \caption{ $H\big(13,6,3,(5,3,1,1)\big)$}
    \label{fig:1}
 \end{center}
\end{figure}
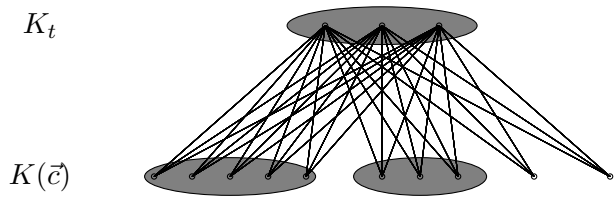

Given a graph $G$, the $r$-clique hypergraph of $G$, denoted by $\mathcal{G}^r$, 
is the $r$-graph whose vertex set is $V(G)$ and in which an $r$-subset of $V(G)$ 
forms a hyperedge of $\mathcal{G}^r$ if and only if the $r$-subset is an $r$-clique of $G$.

In 2022, Kang, Ni, and Shan \cite{Kang2022} proved the following theorem, 
which extends Theorem~\ref{Erdos-r=2} and determines the Tur\'an number of 
Berge-matchings in hypergraphs.

\begin{theorem}[Kang, Ni, and Shan \cite{Kang2022}]\label{berge-matching}
If $r\leq k-1$,
then 
$$\emph{ex}_r(n,\emph{Berge-}M_{k+1})=
\max\left\{ h_r(n,k,0),h_r\left(n,k,k\right) \right\}.$$
Moreover, the extremal hypergraphs are $\mathcal{H}^r(n,k,0)$ or $\mathcal{H}^r(n,k,k)$,
where $\mathcal{H}^r(n,k,s)$ is the $r$-clique hypergraph of $H(n,k,s)$.
\end{theorem}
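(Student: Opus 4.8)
The plan is to prove the lower bound by exhibiting the two constructions, and the upper bound --- together with the description of the extremal hypergraphs --- by a layered exchange (augmenting) argument anchored at a maximum Berge-matching.

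\emph{Lower bound.} The graphs $H(n,k,0)$ (a clique $K_{2k+1}$ plus $n-2k-1$ isolated vertices) and $H(n,k,k)$ ($K_k+\overline{K}_{n-k}$) both have matching number exactly $k$. Since any two vertices of an $r$-clique are adjacent, a Berge-$M_{k+1}$ inside $\mathcal H^r(n,k,0)$ or $\mathcal H^r(n,k,k)$ would yield $k+1$ pairwise disjoint edges in the underlying graph, which is impossible; hence both are Berge-$M_{k+1}$-free, and counting $r$-cliques gives $h_r(n,k,0)$ and $h_r(n,k,k)$ edges respectively. Two elementary facts will drive the rest of the proof: an $r$-graph whose edges all lie in one fixed set of $2k+1$ vertices contains no Berge-$M_{k+1}$ (such a subgraph needs $2k+2$ distinct core vertices), and an $r$-graph in which every edge has at least $r-1$ vertices in one fixed $k$-set $S$ contains no Berge-$M_{k+1}$ (each of the $k+1$ disjoint witness pairs would then have to meet $S$).

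\emph{Upper bound, setup.} Let $\mathcal H$ be a Berge-$M_{k+1}$-free $r$-graph on $n$ vertices with the maximum number of edges, and let $\nu\le k$ be the largest size of a Berge-matching in $\mathcal H$. The case $\nu<k$ is easier and I would dispose of it first (the argument below applied in this smaller setting confines $\mathcal H$ more tightly, and one can also induct on $k$), so assume $\nu=k$ and fix distinct edges $f_1,\dots,f_k$ with pairwise disjoint witness pairs $P_i=\{a_i,b_i\}\subseteq f_i$, and put $U=\bigcup_{i=1}^k P_i$, $|U|=2k$. Every edge $e\notin\{f_1,\dots,f_k\}$ meets $U$, since otherwise a pair inside $e$ appended to $P_1,\dots,P_k$ gives a Berge-matching of size $k+1$. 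The core step is to control the \emph{light} edges, those with $|e\cap U|=1$: if $e\notin\{f_1,\dots,f_k\}$ meets $U$ only in $a_i$, then replacing $f_i$ by $e$ and $P_i$ by $\{a_i,x\}$ for some $x\in e\setminus U$ is again a maximum Berge-matching, now with $b_i$ uncovered; combining this with a second light edge meeting $U$ only in $b_i$ (picking distinct spare vertices inside the two edges) produces a Berge-$M_{k+1}$. Hence for each $i$ all light edges through $P_i$ pass through one vertex of $P_i$, and writing $A$ (with $|A|=k$) for the set of these vertices, every light edge meets $A$.

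\emph{Finishing and uniqueness.} It then remains to bound the \emph{heavy} edges, those with $|e\cap U|\ge 2$; here I would run a further, more delicate round of exchanges (swapping one or two of the $f_i$ out and rebuilding the witness pairs, now with more freedom), again using $r\le k-1$, so as to show that $|E(\mathcal H)|$ cannot exceed the number of $r$-cliques of some graph $K_t+K(\vec c)$ of matching number $k$, i.e.\ $|E(\mathcal H)|\le\max_{0\le t\le k}h_r(n,k,t)$. A convexity computation in $t$ then shows that this maximum is attained at $t\in\{0,k\}$, so $|E(\mathcal H)|\le\max\{h_r(n,k,0),h_r(n,k,k)\}$. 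Running the equality case back through the exchanges --- each inequality is tight only when no vertex outside the prescribed set is used and no admissible $r$-clique is missing --- then forces $\mathcal H\in\{\mathcal H^r(n,k,0),\mathcal H^r(n,k,k)\}$. I expect the heavy-edge exchange analysis to be the main obstacle: there are many configurations to treat according to how an edge meets $U$ and the $f_i$, the exchanges must be made simultaneously consistent, and this is where $r\le k-1$ is genuinely needed; a secondary subtlety is the equality analysis, since an extremal hypergraph is a priori neither a clique hypergraph nor of one of the two extreme types, so one must also rule out every ``intermediate'' $K_t+K(\vec c)$ with $0<t<k$ being extremal. (For $r=2$ the statement is precisely the Erd\H{o}s--Gallai Theorem~\ref{Erdos-r=2}; one cannot simply reduce the general case to it, since the $2$-shadow of a Berge-$M_{k+1}$-free $r$-graph need not be $M_{k+1}$-free.)
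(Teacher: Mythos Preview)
Your approach is quite different from the paper's, and it has a genuine gap at exactly the point you yourself flag. The paper never works with a fixed maximum Berge-matching or exchange arguments. Instead it invokes Lemma~\ref{gerbnerm2019}: any Berge-$M_{k+1}$-free $r$-graph $\mathcal H$ admits an $M_{k+1}$-free \emph{red-blue graph} $G$ on $V(\mathcal H)$ together with a set $E\subseteq E(\mathcal H)$ with $|E|=e(G_{\text{red}})$ and $E(\mathcal H)\setminus E\subseteq E(\mathcal G^r_{\text{blue}})$, so that $|E(\mathcal H)|\le g_r(G)=e(G_{\text{red}})+N(K_r,G_{\text{blue}})$. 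Since $G$ is an ordinary graph with matching number at most $k$, the Tutte--Berge Theorem embeds it in some $H(n,k,t,\vec c)$, and Lemma~\ref{mainlemma} then bounds $g_r$ over all red-blue colourings of $H(n,k,t,\vec c)$ by $\max\{h_r(n,k,t),f_r(n,k,t)\}$, with equality only for $\vec c=(2k-2t+1,1,\dots,1)$ and, when $3\le r\le t-1$, only if every edge is blue. Convexity of $h_r$ and $f_r$ in $t$, together with $f_r(n,k,0)=h_r(n,k,0)$ and $f_r(n,k,k)\le h_r(n,k,k)$, gives the upper bound, and the equality clauses of Lemmas~\ref{kang2022gr}, \ref{kang2022} and~\ref{mainlemma} give the uniqueness.

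The gap in your plan is the heavy-edge step. You assert that ``a further, more delicate round of exchanges'' will yield $|E(\mathcal H)|\le N(K_r,K_t+K(\vec c))$ for some $t,\vec c$, but supply no mechanism, and there is a real conceptual obstacle. A heavy edge with $|e\cap U|=2$ may still have $r-2\ge 1$ vertices outside $U$, so the heavy edges are neither confined to any $(2k+1)$-set nor forced to have $r-1$ vertices in a fixed $k$-set; there is no evident target value of $t$, and no exchange in sight that compares $|E(\mathcal H)|$ with a clique count in a graph. The red-blue reduction is precisely the device that converts the hypergraph problem into a graph problem where Tutte--Berge applies; without it you are effectively attempting to prove a hypergraph Tutte--Berge theorem by hand, and the proposal does not indicate how. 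Note also that the paper needs the auxiliary quantity $f_r(n,k,t)$ (not just $h_r$) to absorb the red-edge contribution in Lemma~\ref{mainlemma}; your sketch aims directly for $\max_t h_r(n,k,t)$, and it is unclear what in a pure exchange argument would play the role of $f_r$. (A smaller wrinkle: in your light-edge step, when $r=2$ the two light edges through $a_i$ and $b_i$ may share their unique outside vertex, so the claimed augmentation to a Berge-$M_{k+1}$ can fail as written; this is repairable, but shows the exchange bookkeeping is more delicate than the sketch suggests.)
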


Motivated by Theorem~\ref{Erdos-r=2}, Conjecture~\ref{Erdos-conjecture}, 
and the corresponding stability results, we aim to establish the stability 
versions of Theorem~\ref{berge-matching}.
It is obvious that $\mathcal{H}$ is a subgraph of $K_n^r$ if $\mathcal{H}$ is a 
Berge-$M_{k+1}$-free $r$-graph on $n \le 2k+1$ vertices, 
where $K_n^r$ is the complete $r$-graph on $n$ vertices. 
Hence, throughout this paper we assume that $n \ge 2k+2$.

For a fixed graph $G$, we color the edges of $G$ red and blue, 
and call the resulting graph a \emph{red-blue graph}.
In a red-blue graph $G$, we denote by $G_{\text{red}}$ the subgraph spanned by 
the red edges and by $G_{\text{blue}}$ the subgraph spanned by the blue edges.
Given a positive integer~$r$ and a red-blue graph $G$, define
\[
g_r(G)= e(G_{\text{red}}) + N(K_r, G_{\text{blue}}),
\]
where $N(K_r, G_{\text{blue}})$ is the number of copies of $K_r$ contained in $G_{\text{blue}}$.

The following theorem presents a stability version of Theorem~\ref{berge-matching}.

\begin{theorem}[Stability]\label{stabilitytheorem-for-bergematching}
Let $p,q,r$ be integers with $p,q \ge 0$ and $r \le k-1$.
Let $\mathcal{H}$ be a Berge-$M_{k+1}$-free $r$-graph on $n$ vertices.
If $p \le k-q$ and 
\[
|E(\mathcal{H})| > \max\{h_r(n,k,p),\, h_r(n,k,k-q),\, f_r(n,k,p),\, f_r(n,k,k-q)\}, 
\]
then there exist a hyperedge set $E \subseteq E(\mathcal{H})$ and an $M_{k+1}$-free red-blue graph $G$ 
which is a subgraph of $H(n,k,t,\vec{c})$ for some $(t,\vec{c})$ satisfying either $0 \le t \le p-1$ or $k-q+1 \le t \le k$, 
such that $|E| = e(G_{\text{red}})$ and 
$E(\mathcal{H}) \setminus E \subseteq E(\mathcal{G}^r_{\text{blue}})$.
Furthermore, if $r=2$, then we have $G = \mathcal{H}$ and $E = \emptyset$.
\end{theorem}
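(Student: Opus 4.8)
The plan is to reduce the hypergraph problem to a stability question about the weight function $g_r$ on $M_{k+1}$-free red--blue graphs, and then to settle that question by combining the Gallai--Edmonds structure with a convexity computation. Call a quadruple $(G,E,E',\phi)$ a \emph{realization} of an $r$-graph $\mathcal H$ if $E\sqcup E'=E(\mathcal H)$, $G$ is a red--blue graph on $V(\mathcal H)$, $\phi\colon E\to E(G_{\text{red}})$ is a bijection with $\phi(e)\subseteq e$ for every $e\in E$, and every $e\in E'$ is an $r$-clique of $G_{\text{blue}}$. Since the hyperedges in $E'$ are distinct $r$-cliques of $G_{\text{blue}}$, every realization satisfies $|E(\mathcal H)| = e(G_{\text{red}})+|E'| \le e(G_{\text{red}})+N(K_r,G_{\text{blue}}) = g_r(G)$. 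The theorem will follow from two claims: (i) every Berge-$M_{k+1}$-free $r$-graph admits a realization with $\nu(G)\le k$; and (ii) if $G$ is an $M_{k+1}$-free red--blue graph on $n$ vertices with $g_r(G) > \max\{h_r(n,k,p),\,h_r(n,k,k-q),\,f_r(n,k,p),\,f_r(n,k,k-q)\}$, then $G\subseteq H(n,k,t,\vec c)$ for some $\vec c$ and some $t$ with $0\le t\le p-1$ or $k-q+1\le t\le k$. Granting both: a realization as in (i) has $g_r(G)\ge|E(\mathcal H)|$, which exceeds the four thresholds by hypothesis, so (ii) applies; and when $r=2$ a Berge-$M_{k+1}$-free $r$-graph is just an $M_{k+1}$-free graph, for which $(\mathcal H,\emptyset,E(\mathcal H),\text{id})$ is a realization with $G=\mathcal H$ and $E=\emptyset$, giving the ``furthermore'' clause.

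For (i), I would choose a realization $(G,E,E',\phi)$ with $\nu(G)$ minimum, and secondarily with $e(G_{\text{blue}})$ minimum. A cleaning step shows that in such a realization every blue edge lies in some hyperedge of $E'$ (otherwise it may be deleted, lowering $e(G_{\text{blue}})$ without raising $\nu$). Assume for contradiction $\nu(G)=\ell\ge k+1$, fix a maximum matching $N$ of $G$, and try to build a Berge-$M_\ell$: charge each red edge $\phi(e)\in N$ to the hyperedge $e$ with chosen pair $\phi(e)$, and charge each blue edge $\{x,y\}\in N$ to an $E'$-hyperedge containing it with chosen pair $\{x,y\}$. The chosen pairs are pairwise disjoint since $N$ is a matching, so the only thing to verify is that the charges can be made injective; the one obstruction is several blue edges of $N$ being trapped inside a single $r$-clique hyperedge (possible only for $r\ge4$). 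The plan is to rule this out using extremality: if such a configuration were unavoidable, one could recolour some of the blue $r$-cliques involved --- turning enough of their edges red, one pair surviving per recoloured clique --- to strictly decrease $\nu(G)$, contradicting minimality; this is the part I expect to be the most delicate, and it may require a sharper secondary optimization than $e(G_{\text{blue}})$. Once injectivity is secured the charges form a Berge-$M_\ell$, hence a Berge-$M_{k+1}$, contradicting Berge-$M_{k+1}$-freeness; so $\nu(G)\le k$.

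For (ii), note first that since $\nu(G)\le k$ and $n\ge 2k+2$, the graph $G$ is contained in an edge-maximal graph on $n$ vertices with matching number exactly $k$, and by the Gallai--Edmonds structure theorem these are precisely the graphs $H(n,k,t,\vec c)$; fix such a supergraph $H(n,k,t,\vec c)$. It suffices to show $t\notin[p,k-q]$. The key estimate is a per-parameter bound: for every $\vec c$ and every red--blue colouring of $H(n,k,t,\vec c)$ one has $g_r\le\max\{h_r(n,k,t),f_r(n,k,t)\}$. To prove it I would argue that recolouring an edge of the clique $K_{2k+1-t}$ (the join of $K_t$ with the largest clique of $K(\vec c)$) from blue to red never increases $g_r$; that the blue clique-count is maximised over $\vec c$ by taking a single large clique, so the blue contribution is at most $h_r(n,k,t)$ in the all-blue case; and that for each pendant-type vertex the joint contribution of its $t$ edges to $K_t$, as a function of how many are coloured blue, is convex, so its maximum is attained when all are blue (contributing $\binom t{r-1}$, giving $h_r$) or all are red (contributing $t$, giving $f_r$). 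Next, a direct computation with binomial identities shows that $h_r(n,k,\cdot)$ and $f_r(n,k,\cdot)$ are convex on $\{0,1,\dots,k\}$, hence so is $\max\{h_r,f_r\}$, so its maximum over $\{p,p+1,\dots,k-q\}$ is attained at $p$ or $k-q$. Therefore, if $t\in[p,k-q]$ then $g_r(G)\le g_r(H(n,k,t,\vec c))\le\max\{h_r(n,k,t),f_r(n,k,t)\}\le\max\{h_r(n,k,p),h_r(n,k,k-q),f_r(n,k,p),f_r(n,k,k-q)\}$, contradicting the hypothesis of (ii); so $t\notin[p,k-q]$, as required. (Here $g_r(G)\le g_r(H(n,k,t,\vec c))$ for a suitable colouring because extending $G$ to $H(n,k,t,\vec c)$ with every new edge blue does not decrease $g_r$.)

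I expect the crux to be claim (i): it is where Berge-$M_{k+1}$-freeness is actually used, and the interplay between ``red'' hyperedges (carrying one pair) and ``blue'' hyperedges (packed into $r$-cliques) makes the extraction of a large Berge-matching from a maximum matching of $G$ subtle, precisely at configurations where several matching edges lie inside one $r$-clique hyperedge. A secondary technical hurdle is the per-parameter optimization of $g_r$ over all colourings of all $H(n,k,t,\vec c)$ in claim (ii): although morally a convexity statement, mixed colourings of the large clique combined with a non-uniform $\vec c$ must be handled carefully to confirm that none of them beats $\max\{h_r(n,k,t),f_r(n,k,t)\}$.
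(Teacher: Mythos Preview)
Your overall architecture coincides with the paper's: reduce to an $M_{k+1}$-free red--blue graph $G$ with $|E(\mathcal H)|\le g_r(G)$, embed $G$ in some $H(n,k,t,\vec c)$ via Tutte--Berge/Gallai--Edmonds, bound $g_r$ over all colourings of $H(n,k,t,\vec c)$ by $\max\{h_r(n,k,t),f_r(n,k,t)\}$, and finish by convexity in $t$. Your claim (ii) is exactly the paper's Lemma~\ref{mainlemma} plus the convexity step, and your sketch for it is in the right spirit (the paper's proof is a bit different in mechanics---it shifts pairs of vertices from small odd components into the largest one and then handles the pendant vertices via Lemmas~\ref{kang2022}--\ref{kang2022max}---but the target inequality is the same).

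The gap is in claim (i). What you are trying to establish is precisely Lemma~\ref{gerbnerm2019} (Gerbner--Methuku--Palmer), which the paper invokes as a known black box; you should do the same. Your proposed extremal argument does not work as written: $\nu(G)$ depends only on the underlying graph, not on the colouring, so ``recolouring some of the blue $r$-cliques'' cannot decrease $\nu(G)$. Even under the more charitable reading---moving a problematic hyperedge $e^*$ from $E'$ to $E$, assigning it a red pair $\phi(e^*)$, and then hoping to delete the remaining edges of the $e^*$-clique from $G$---those edges may still be required as blue edges by other hyperedges of $E'$, so you cannot in general delete them, and $\nu(G)$ need not drop. The standard proof of Lemma~\ref{gerbnerm2019} avoids this entirely: one builds the bipartite incidence graph between $E(\mathcal H)$ and $\binom{V}{2}$, takes a maximum matching there, and uses K\"onig/Hall to show directly that if the resulting red--blue $G$ contained a copy of $F$ then one could extract a system of distinct hyperedge representatives for the edges of that copy, yielding a Berge-$F$ in $\mathcal H$. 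There is no minimisation over realizations and no recolouring step. Once you cite Lemma~\ref{gerbnerm2019} for (i), the rest of your plan matches the paper's proof.
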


This stability theorem implies Theorem~\ref{berge-matching}, as discussed in Section~3.
As a direct application, we establish the following corollary, 
which is a stability version of Theorem~\ref{Erdos-r=2}.

\begin{corollary}\label{corollary1}
Let $p,q,k$ be integers with $p \le k-q$.
If $G$ is an $M_{k+1}$-free graph on $n$ vertices with 
\[
|E(G)| > \max\{h_2(n,k,p),\, h_2(n,k,k-q)\}, 
\]
then $G$ is a subgraph of $H(n,k,t,\vec{c})$ for some $(t,\vec{c})$ satisfying either $0 \le t \le p-1$ or $k-q+1 \le t \le k$.
\end{corollary}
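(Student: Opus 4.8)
The plan is to obtain Corollary~\ref{corollary1} as the $r=2$ specialization of Theorem~\ref{stabilitytheorem-for-bergematching}, so the entire argument is a short reduction. First I would note that, for a $2$-uniform hypergraph, being Berge-$M_{k+1}$-free is the same as being $M_{k+1}$-free: if $\phi$ is a bijection from the edges of $M_{k+1}$ onto $2$-element hyperedges with $e \subseteq \phi(e)$, then $|\phi(e)| = |e| = 2$ forces $\phi(e) = e$, so every Berge-$M_{k+1}$ on $2$-sets is just $M_{k+1}$ itself. Hence an $M_{k+1}$-free graph $G$ on $n$ vertices is exactly a Berge-$M_{k+1}$-free $2$-graph, and (taking $k \ge 3$, which is precisely the condition $r = 2 \le k-1$ needed by the theorem) we may apply Theorem~\ref{stabilitytheorem-for-bergematching} to $\mathcal{H} = G$ once the edge-count hypothesis is verified.

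Next I would match the thresholds. Because $\binom{t}{r-1} = \binom{t}{1} = t$ when $r = 2$, we have
\[
h_2(n,k,t) = \binom{2k+1-t}{2} + (n-2k-1+t)t = f_2(n,k,t)
\]
for every $t$, so in particular $h_2(n,k,p) = f_2(n,k,p)$ and $h_2(n,k,k-q) = f_2(n,k,k-q)$, and therefore
\[
\max\{h_2(n,k,p),\, h_2(n,k,k-q)\} = \max\{h_2(n,k,p),\, h_2(n,k,k-q),\, f_2(n,k,p),\, f_2(n,k,k-q)\}.
\]
Thus the hypothesis $|E(G)| > \max\{h_2(n,k,p),\, h_2(n,k,k-q)\}$ of the corollary is exactly the hypothesis on $|E(\mathcal{H})|$ required in Theorem~\ref{stabilitytheorem-for-bergematching}.

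Finally I would invoke Theorem~\ref{stabilitytheorem-for-bergematching} with $\mathcal{H} = G$ and $r = 2$. It yields a hyperedge set $E \subseteq E(G)$ and an $M_{k+1}$-free red-blue graph that is a subgraph of $H(n,k,t,\vec{c})$ for some $(t,\vec{c})$ with either $0 \le t \le p-1$ or $k-q+1 \le t \le k$; moreover, by the last sentence of that theorem, for $r = 2$ this red-blue graph equals $\mathcal{H} = G$ and $E = \emptyset$. Consequently $G$ itself is a subgraph of $H(n,k,t,\vec{c})$ for an admissible $(t,\vec{c})$, which is exactly the conclusion of the corollary. There is no real obstacle in this deduction: all the content sits inside Theorem~\ref{stabilitytheorem-for-bergematching}, and the only points needing a brief justification are the identification of Berge-$M_{k+1}$-freeness with $M_{k+1}$-freeness for $2$-graphs and the elementary identity $h_2 \equiv f_2$.
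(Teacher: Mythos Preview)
Your reduction is correct and is exactly the intended ``direct application'' of Theorem~\ref{stabilitytheorem-for-bergematching}: the paper does not spell out a separate proof, but the passage from the theorem to the corollary amounts precisely to the two observations you make, namely that for $2$-graphs Berge-$M_{k+1}$-freeness coincides with $M_{k+1}$-freeness and that $h_2(n,k,t)=f_2(n,k,t)$ because $\binom{t}{1}=t$. Your remark that one implicitly needs $k\ge 3$ (so that $r=2\le k-1$) is a fair point the paper leaves tacit.
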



Corollary~\ref{corollary1} also implies a stability result for the number of edges 
in a graph with a given matching number and minimum degree.
This extends the stability results established by Duan, Ning, Peng, Wang, and Yang \cite{Duan2020}.
For sufficiently large $n$, we can provide a more precise description of 
Theorem~\ref{stabilitytheorem-for-bergematching}.

\begin{theorem}[Stability]\label{stabilitytheorem-for-bergematching-sufficiently}
Let $k \ge 2q+3$ and $r \le k-q-1$.
Let $\mathcal{H}$ be a $\emph{Berge-}M_{k+1}$-free $r$-graph on $n$ vertices.
For $n$ sufficiently large, if 
\[
|E(\mathcal{H})| > h_r(n,k,k-q),
\] 
then there exists a vertex set $S \subseteq V(\mathcal{H})$ of size $k-q \le |S| \le k$ 
such that the hypergraph 
\(\mathcal{H}_0\) is $\emph{Berge-}M_{k+1-|S|}$-free, 
with vertex set $V(\mathcal{H}_0) = V(\mathcal{H}) \setminus S$ and edge set 
\[
E(\mathcal{H}_0) = \{ e \setminus S : e \in E(\mathcal{H}),\ |e \cap (V(\mathcal{H}) \setminus S)| \ge 2 \}.
\]
\end{theorem}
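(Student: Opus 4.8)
The plan is to bootstrap from Theorem~\ref{stabilitytheorem-for-bergematching} with $p=0$, and then, for large $n$, upgrade the red--blue structure to the clean statement above by a degree argument, a routing (extension) argument, and a counting argument. First I would observe that for $n$ large one has $\max\{h_r(n,k,0),h_r(n,k,k-q),f_r(n,k,0),f_r(n,k,k-q)\}=h_r(n,k,k-q)$: since $r\le k-q-1<k-q$, the quantities $h_r(n,k,0)=f_r(n,k,0)=\binom{2k+1}{r}$ are constants while $h_r(n,k,k-q)$ is linear in $n$ with coefficient $\binom{k-q}{r-1}\ge 1$, and $\binom{k-q}{r-1}\ge k-q$ gives $h_r(n,k,k-q)\ge f_r(n,k,k-q)$. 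Hence the hypothesis $|E(\mathcal H)|>h_r(n,k,k-q)$ lets us apply Theorem~\ref{stabilitytheorem-for-bergematching} with $p=0$ (note $0\le k-q$ and $r\le k-1$), yielding $E\subseteq E(\mathcal H)$ and an $M_{k+1}$-free red--blue graph $G\subseteq H(n,k,t,\vec c)$ with $k-q+1\le t\le k$ (the range $0\le t\le-1$ being empty), such that $|E|=e(G_{\text{red}})$ and $E(\mathcal H)\setminus E\subseteq E(\mathcal G^r_{\text{blue}})$. For $r=2$ this already finishes the proof: here $G=\mathcal H$ and $E=\emptyset$, so $\mathcal H\subseteq H(n,k,t,\vec c)$ as a graph; taking $S=V(K_t)$ gives $\mathcal H_0\subseteq K(\vec c)$, which has matching number $\beta(\vec c)=k-t$, so $\mathcal H_0$ is $M_{k+1-t}$-free and $k-q\le|S|=t\le k$. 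From now on $r\ge 3$.

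For $r\ge 3$ I would fix a large constant $L=L(k,r)$ and set $S=\{w\in V(K_t):\deg_{\mathcal H}(w)>L\}$. The bound $|S|\le k$ follows from a greedy argument: if $k+1$ vertices had degree $>L$, one could build a Berge-$M_{k+1}$ hyperedge by hyperedge, since for a fixed high-degree vertex only $O_{k,r}(1)$ of its hyperedges are ruled out (those avoiding the bounded set of vertices already used, plus the bounded set of hyperedges already chosen). Next I would prove $\mathcal H_0$ is Berge-$M_{k+1-|S|}$-free. If not, let $\nu_0$ be the Berge-matching number of $\mathcal H_0$, so $\nu_0\ge k+1-|S|$, and note $\nu_0\le\nu_B(\mathcal H)\le k$ since a Berge-$M_{\nu_0}$ in $\mathcal H_0$ lifts to one in $\mathcal H$ (cores lie in $e\setminus S\subseteq e$, and distinct $\mathcal H_0$-edges come from distinct $\mathcal H$-edges). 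Fix such a Berge-$M_{\nu_0}$, realised by $\hat e_1,\dots,\hat e_{\nu_0}$ with $2\nu_0$ core vertices in $V\setminus S$. Processing $S$ one vertex at a time, for each $w\in S$ I would choose a hyperedge $g_w\ni w$ of $\mathcal H$ containing a vertex $v_w\notin S$ distinct from all previously used vertices, with $g_w$ distinct from the $\hat e_i$ and the previously chosen $g_{w'}$; this is possible because $\deg_{\mathcal H}(w)>L$ while only $O_{k,r}(1)$ hyperedges through $w$ are blocked (the bounded-size ``forbidden'' vertex set has size $\le 2\nu_0+|S|\le 3k$). Then $\hat e_1,\dots,\hat e_{\nu_0},\{g_w\}_{w\in S}$ with cores the old ones together with $\{w,v_w\}_{w\in S}$ form a Berge-$M_{\nu_0+|S|}$, hence a Berge-$M_{k+1}$ as $\nu_0+|S|\ge k+1$ — a contradiction.

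It remains to show $|S|\ge k-q$; here I would first bound $|E(\mathcal H_0)|$ by a constant. Put $B=V(K_t)\setminus S$; its vertices are light, so at most $|B|\cdot L=O(1)$ hyperedges of $\mathcal H$ meet $B$. An edge of $\mathcal H_0$ not meeting $B$ arises from some $e\in E(\mathcal H)$ contained in $S\cup V(K(\vec c))$ with $|e\cap V(K(\vec c))|\ge 2$; for non-exceptional $e$ this is an $r$-clique of $H(n,k,t,\vec c)$ whose part outside $K_t$ lies inside one component of $K(\vec c)$, and since $\sum_i(c_i-1)=2(k-t)$ there are only $O(1)$ such $e$, while for exceptional $e$ — using the finer structure behind Theorem~\ref{stabilitytheorem-for-bergematching}, namely that each exceptional hyperedge differs from a blue $r$-clique in a bounded way and so has $O(1)$ vertices outside $K_t$ — the same bound holds. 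Hence $|E(\mathcal H_0)|=O(1)$. Splitting $E(\mathcal H)$ by $|e\cap S|$: those with $|e\cap S|=r$ number $O(1)$; those with $|e\cap S|\le r-2$ inject, via $e\mapsto e\setminus S$, into at most $2^{|S|}|E(\mathcal H_0)|=O(1)$ possibilities; and those with $|e\cap S|=r-1$ number at most $\binom{|S|}{r-1}(n-|S|)$. Thus $|E(\mathcal H)|\le\binom{|S|}{r-1}n+O(1)$. If $|S|\le k-q-1$, then $\binom{|S|}{r-1}\le\binom{k-q-1}{r-1}<\binom{k-q}{r-1}$ (strict, since $r\le k-q-1$), so for $n$ large $|E(\mathcal H)|<h_r(n,k,k-q)$, contradicting the hypothesis. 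Therefore $k-q\le|S|\le k$, and $S$ is as required.

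I expect the main obstacle to be the constant bound $|E(\mathcal H_0)|=O(1)$: the statement of Theorem~\ref{stabilitytheorem-for-bergematching} only records $|E|=e(G_{\text{red}})$, so to see how the exceptional hyperedges collapse after deleting $S$ one must extract from its proof that each exceptional hyperedge is ``almost'' a blue clique (for instance, a blue $(r-1)$-clique together with one extra vertex, the associated red edge sitting inside the blueprint $H(n,k,t,\vec c)$). Without an input of this kind the number of edges of $\mathcal H_0$ is only $O(n)$ with a coefficient too large to force $|S|\ge k-q$, and the whole last paragraph fails. A secondary, routine point is choosing the single constant $L$ large enough simultaneously for the greedy Berge-$M_{k+1}$ construction and for the routing step, and verifying that all the ``$O(1)$'' errors are genuinely independent of $n$.
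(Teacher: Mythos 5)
Your first three steps (reduction to Theorem~\ref{stabilitytheorem-for-bergematching} with $p=0$, the clean $r=2$ case, the greedy bound $|S|\le k$, and the routing argument showing $\mathcal H_0$ is Berge-$M_{k+1-|S|}$-free) are sound and close in spirit to the paper, even though you define $S$ by $\mathcal H$-degree rather than, as the paper does, by iteratively discarding vertices of $K_t$ that lie in fewer than $(r(k+1)+t+1)\binom{t}{r-1}$ blue $r$-cliques into $W$ realized by hyperedges of $\mathcal H$. But the last paragraph has a genuine gap, exactly where you flag it: the bound $|E(\mathcal H_0)|=O(1)$. The statement of Theorem~\ref{stabilitytheorem-for-bergematching} (equivalently Lemma~\ref{gerbnerm2019}) only guarantees that each exceptional hyperedge $e\in E$ \emph{contains} its associated red edge; it says nothing about the remaining $r-2$ vertices of $e$. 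A red edge from $K_t$ to a singleton of $W$ (and there can be $\Theta(n)$ such red edges) may correspond to a hyperedge whose other vertices include a second vertex of $W\setminus S$, and every such hyperedge survives into $E(\mathcal H_0)$. The auxiliary structural claim you would need ("each exceptional hyperedge has $O(1)$ vertices outside $K_t$") is not extractable from the paper's proof of Theorem~\ref{stabilitytheorem-for-bergematching}, and the fallback bound $|E(\mathcal H_0)|=O(n)$ coming from the Berge-$M_{k+1-|S|}$-freeness of $\mathcal H_0$ carries too large a coefficient to force $|S|\ge k-q$. So as written the final step does not close.

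The paper circumvents this by never counting hyperedges of $\mathcal H$ according to where their vertices lie. It proves $|S|\ge k-q$ by bounding the quantity $g'_r(G)=e(G_{\text{red}})+N_{\mathcal H}(K_r,G_{\text{blue}})\ge|E(\mathcal H)|$ directly on the red--blue graph $G\subseteq H(n,k,t,\vec c)$: each exceptional hyperedge contributes exactly $1$ as a red edge of $G$ (absorbed into the terms $\eta_i$ and $\ell_i$), each remaining hyperedge contributes as a blue $r$-clique, and Lemma~\ref{kang2022max} then gives the per-vertex bound $\ell_i+\binom{\gamma-\ell_i}{r-1}\le\max\{\binom{\gamma}{r-1},\gamma\}$ leading to $g'_r(G)\le n\max\{\binom{\gamma}{r-1}+t-\gamma,\,t\}+O(1)<h_r(n,k,k-q)$ whenever $\gamma\le k-q-1$. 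If you replace your final counting paragraph with an argument of this type (working with the red--blue graph rather than with $\mathcal H_0$, and defining $S$ so that vertices outside $S$ contribute at most a constant number of blue cliques into $W$), the rest of your proof goes through.
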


Note that $\mathcal{H}_0$ is not necessarily an $r$-graph, and parallel edges are allowed in $\mathcal{H}_0$.
As an application, we derive the following corollary.

\begin{corollary}\label{corollary}
Let $k \ge 5$ and $r \le k-2$.
Let $\mathcal{H}$ be a $\emph{Berge-}M_{k+1}$-free $r$-graph on $n$ vertices.
For $n$ sufficiently large, if 
\[
|E(\mathcal{H})| > h_r(n,k,k-1),
\] 
then  $\mathcal{H}$ is a subgraph of $\mathcal{H}^r(n,k,k)$.
\end{corollary}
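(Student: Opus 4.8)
The plan is to derive the corollary from Theorem~\ref{stabilitytheorem-for-bergematching-sufficiently} applied with $q=1$: the hypotheses match exactly, since $k\ge 5=2\cdot1+3$, $r\le k-2=k-1-1$, and the threshold there is our $h_r(n,k,k-1)$. We thus obtain a set $S\subseteq V(\mathcal H)$ with $k-1\le|S|\le k$ for which the trace hypergraph $\mathcal H_0$ on $V(\mathcal H)\setminus S$ is $\emph{Berge-}M_{k+1-|S|}$-free; recall that every edge of $\mathcal H_0$ has size between $2$ and $r$ and that parallel edges may occur in $\mathcal H_0$. We shall use repeatedly that $H(n,k,k)=K_k+\overline{K_{n-k}}$, so that $\mathcal H^r(n,k,k)$ is precisely the $r$-graph whose edges are the $r$-subsets of the vertex set meeting the distinguished $k$-clique in at least $r-1$ vertices; consequently it suffices, in each case, to exhibit a $k$-set $S^{*}\subseteq V(\mathcal H)$ with $|e\cap S^{*}|\ge r-1$ for every $e\in E(\mathcal H)$. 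If $|S|=k$ then $\mathcal H_0$ is $\emph{Berge-}M_1$-free; since one hyperedge of size $\ge2$ is already a $\emph{Berge-}M_1$, this gives $E(\mathcal H_0)=\emptyset$, so every edge of $\mathcal H$ has at most one vertex outside $S$, i.e. at least $r-1$ vertices in $S$, and $S^{*}=S$ works. So assume from now on that $|S|=k-1$ and $\mathcal H_0$ is $\emph{Berge-}M_2$-free.

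The heart of the argument is a short structural trichotomy for $\mathcal H_0$. If two distinct edge-sets $A,B$ of size $\ge2$ avoid forming a $\emph{Berge-}M_2$, one checks---by exhibiting, in every other configuration, two vertex-disjoint $2$-element subsets, one in $A$ and one in $B$---that $|A\cup B|\le3$ and at least one of $A,B$ has size $2$. In particular there is at most one edge-set of size $\ge3$; any edge-set of size $\ge4$ is the unique edge-set; and an edge-set of size $\ge4$ cannot occur with multiplicity $\ge2$, since two parallel copies of a set of size $\ge4$ contain two disjoint pairs. It follows that $\mathcal H_0$ is of one of three kinds: (I) all edge-sets are $2$-sets through a common vertex $v\in V(\mathcal H)\setminus S$; (II) all edge-sets lie inside a fixed set of at most $3$ vertices; or (III) $\mathcal H_0$ is a single edge $h$ with $4\le|h|\le r$. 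In kinds (II) and (III) the multiplicity of any $j$-element edge-set equals the number of $(r-j)$-subsets of $S$ extending it to an edge of $\mathcal H$, hence is at most $\binom{k-1}{r-j}$; thus the number of edges of $\mathcal H$ with at least two vertices outside $S$ is at most $T:=3\binom{k-1}{r-2}+\binom{k-1}{r-3}$, and it is at most $1$ in kind (III).

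We now finish each kind. In kind (I) set $S^{*}=S\cup\{v\}$, a $k$-set: an edge of $\mathcal H$ with at most one vertex outside $S$ has $\ge r-1$ vertices in $S\subseteq S^{*}$; an edge with exactly two vertices outside $S$ projects onto an edge-set of $\mathcal H_0$, hence contains $v$, and so has $(r-2)+1=r-1$ vertices in $S^{*}$; and no edge has three or more vertices outside $S$. Therefore $\mathcal H\subseteq\mathcal H^r(n,k,k)$. In kinds (II) and (III), the edges of $\mathcal H$ with at most one vertex outside $S$ number at most $\binom{k-1}{r}+(n-k+1)\binom{k-1}{r-1}$, so using the Pascal identity $\binom{k+2}{r}-\binom{k-1}{r}=3\binom{k-1}{r-1}+3\binom{k-1}{r-2}+\binom{k-1}{r-3}$ we obtain
\[
|E(\mathcal H)|\le\binom{k-1}{r}+(n-k+1)\binom{k-1}{r-1}+T=h_r(n,k,k-1),
\]
contradicting $|E(\mathcal H)|>h_r(n,k,k-1)$. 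Hence kinds (II) and (III) cannot occur, completing the proof.

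I expect the main obstacle to be carrying out the structural trichotomy rigorously and exhaustively---especially bookkeeping the edge multiplicities of $\mathcal H_0$, which is exactly what makes kind (III) collapse---rather than the arithmetic, which is routine; the one delicate point there is that the correction term $T$ in kinds (II) and (III) fills the gap up to $h_r(n,k,k-1)$ precisely, and this is why $q=1$ is the right specialization of Theorem~\ref{stabilitytheorem-for-bergematching-sufficiently}.
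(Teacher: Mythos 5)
Your proposal is correct and follows essentially the same route as the paper: apply Theorem~\ref{stabilitytheorem-for-bergematching-sufficiently} with $q=1$, dispose of $|S|=k$ immediately, and for $|S|=k-1$ classify the Berge-$M_2$-free trace $\mathcal{H}_0$ (star of $2$-sets versus everything inside a $\le 3$-set versus a single large edge), ruling out the non-star configurations by the same multiplicity count $3\binom{k-1}{r-2}+\binom{k-1}{r-3}$ against $h_r(n,k,k-1)$. The only cosmetic difference is that you fold the triangle-of-$2$-sets case into the counting bound, whereas the paper splits cases by the maximum trace size $V_m$ and handles the triangle inside its $V_m=2$ case.
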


The paper is organized as follows. 
We first introduce some preliminaries in Section~2.
In Section~3, we first establish a key lemma, then prove Theorem~\ref{stabilitytheorem-for-bergematching}, 
and finally use this theorem to prove Theorem~\ref{berge-matching}.
In Section~4, we prove Theorem~\ref{stabilitytheorem-for-bergematching-sufficiently} and Corollary~\ref{corollary}.

\section{Preliminaries}

Let $\mathcal{H}$ be an $r$-graph and $V \subseteq V(\mathcal{H})$ be a set of vertices.
The subgraph of $\mathcal{H}$ induced by $V$ is denoted by $\mathcal{H}[V]$. 
For convenience, we write $[n]=\{1,\cdots,n\}$.

The Tutte-Berge Theorem characterizes the structure of graphs with bounded matching number.

\begin{theorem}[Tutte-Berge Theorem \cite{Lovász2009}]\label{Tutte-Berge Theorem}
A graph $G$ is $M_{k+1}$-free if and only if there exists a set $T \subseteq V(G)$ 
such that each connected component $C_1,\dots,C_m$ of $G \setminus T$ has an odd number of vertices and
\[
|T| + \sum_{i=1}^{m} \left\lfloor \frac{|V(C_i)|}{2} \right\rfloor \le k.
\]
\end{theorem}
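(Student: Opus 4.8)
The plan is to prove the two implications separately; the reverse one is a quick counting argument and the forward one is the substantive half, which I would deduce from Tutte's $1$-factor theorem. For the reverse direction, suppose such a set $T$ exists and let $M$ be any matching of $G$. Partition $M$ into the edges meeting $T$ and the edges disjoint from $T$: since distinct matching edges are vertex-disjoint, at most $|T|$ edges of $M$ meet $T$, and every edge of $M$ disjoint from $T$ lies inside a single component $C_i$ of $G-T$, which carries at most $\lfloor |V(C_i)|/2\rfloor$ matching edges. Hence $|M|\le |T|+\sum_{i=1}^m\lfloor |V(C_i)|/2\rfloor\le k$, so $G$ has no matching of size $k+1$.

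For the forward direction I would first establish the deficiency form of the Tutte--Berge formula: writing $o(G-S)$ for the number of odd components of $G-S$ and $d(G)=\max_{S\subseteq V(G)}(o(G-S)-|S|)$, the maximum matching number $\nu(G)$ equals $\frac12(|V(G)|-d(G))$. The inequality $\nu(G)\le \frac12(|V(G)|-d(G))$ is immediate, since for every $S$ a maximum matching leaves at least $o(G-S)-|S|$ vertices uncovered. For the reverse inequality, set $d=d(G)$ (a parity count shows $d$ and $|V(G)|$ have the same parity), and let $G'$ be obtained from $G$ by adjoining $d$ new vertices, each joined to every other vertex of $G'$. I would then check Tutte's condition for $G'$: if some new vertex survives the deletion, then $G'-S'$ is connected and $o(G'-S')\le|S'|$ follows at once (using that $|V(G')|$ is even); if all $d$ new vertices are deleted, then $G'-S'=G-S$ for the corresponding $S\subseteq V(G)$ and $o(G'-S')=o(G-S)\le d+|S|=|S'|$ by the definition of $d$. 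Thus $G'$ has a perfect matching $M'$, and since at most $d$ edges of $M'$ meet the new vertices, $M'\cap E(G)$ is a matching of $G$ of size at least $\frac12(|V(G)|+d)-d=\frac12(|V(G)|-d)$, which proves the formula.

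It remains to extract the witness $T$. I would pick a set $S$ attaining $d(G)$ with $|S|$ as large as possible; then every component of $G-S$ is odd, for an even component $C$ would let me move a vertex $v\in C$ into $S$, and as $C-v$ has oddly many vertices it has at least one odd component, whence $o(G-(S\cup\{v\}))-|S\cup\{v\}|\ge o(G-S)-|S|=d(G)$, contradicting the maximality of $|S|$. Writing $C_1,\dots,C_m$ for the (odd) components of $G-S$, a direct computation gives $|S|+\sum_{i=1}^m\lfloor |V(C_i)|/2\rfloor=\frac12(|V(G)|+|S|-m)=\frac12(|V(G)|-d(G))=\nu(G)\le k$, so $T=S$ works. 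The only genuinely nontrivial ingredient is Tutte's $1$-factor theorem ($G$ has a perfect matching if and only if $o(G-S)\le|S|$ for all $S$), which I would either cite from \cite{Lovász2009} or reprove by the standard argument on an edge-maximal counterexample: there, the set $S$ of vertices adjacent to all others has the property that every component of $G-S$ is a clique, and one assembles an explicit perfect matching to reach a contradiction. So the main obstacle is simply reproducing this classical structural argument — which is precisely why one usually just cites it.
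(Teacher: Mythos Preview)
Your proof is correct. The reverse direction is the obvious counting bound, and your forward direction is the standard derivation of the Tutte--Berge formula from Tutte's $1$-factor theorem, followed by the clean trick of maximizing $|S|$ among all sets attaining the deficiency so as to eliminate even components. The arithmetic at the end, $|S|+\sum_i\lfloor c_i/2\rfloor=\tfrac12(n+|S|-m)=\tfrac12(n-d(G))=\nu(G)$, is exactly right.

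There is nothing to compare against in the paper, however: Theorem~\ref{Tutte-Berge Theorem} is stated in the Preliminaries as a quoted result from \cite{Lov\'asz2009} and is not proved anywhere in the text. The authors simply invoke it as a black box in the proof of Theorem~\ref{stabilitytheorem-for-bergematching}. So you have supplied a full self-contained proof where the paper gives none; your approach (reduce to Tutte via the auxiliary graph with $d$ dominating vertices, then pick a maximal witness) is the textbook route, and there is no alternative argument in the paper to contrast it with.
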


The following lemma can be extracted from Gerbner, Methuku, and Palmer \cite{Gerbnerm2019}.

\begin{lemma}[Gerbner, Methuku, and Palmer \cite{Gerbnerm2019}]\label{gerbnerm2019}
If $\mathcal{H}$ is an $r$-uniform $\text{Berge-}F$-free hypergraph, then there exists an $F$-free red-blue graph $G$ 
and a hyperedge set $E \subseteq E(\mathcal{H})$ such that 
$|E| = e(G_{\text{red}})$ and 
$E(\mathcal{H}) \setminus E \subseteq E(\mathcal{G}^r_{\text{blue}})$.
Consequently,
\[
|E(\mathcal{H})| \le g_r(G).
\]
\end{lemma}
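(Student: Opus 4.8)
The plan is to prove this lemma --- which is essentially the reduction of Gerbner, Methuku, and Palmer \cite{Gerbnerm2019} --- by an iterative construction, and I will describe the idea rather than every bookkeeping detail. The starting point is an elementary remark: if $A\subseteq E(\mathcal{H})$ and $\sigma$ is an injection of $A$ into $\binom{V(\mathcal{H})}{2}$ with $\sigma(h)\subseteq h$ for every $h\in A$, then the graph $G_{\text{red}}$ with edge set $\sigma(A)$ is automatically $F$-free: a copy of $F$ inside $G_{\text{red}}$ would, through $\sigma^{-1}$, give an injection $\phi\colon E(F)\to A\subseteq E(\mathcal{H})$ with $e\subseteq\phi(e)$ for each $e\in E(F)$, i.e.\ a Berge-$F$ in $\mathcal{H}$, contrary to hypothesis. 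Thus $F$-freeness of a \emph{faithfully represented} red part comes for free, and the real content of the lemma is to absorb the remaining hyperedges into a blue part while keeping the whole graph $F$-free.

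First I would record the trivial initial configuration. If $r=2$, a $2$-uniform Berge-$F$-free hypergraph is just an $F$-subgraph-free graph, so $G=\mathcal{H}$ with all edges blue and $E=\emptyset$ already works; assume henceforth $r\ge 3$. Take $E=\emptyset$ and let $G$ be the complete graph on $V(\mathcal{H})$ with every edge blue; then $|E|=0=e(G_{\text{red}})$, every hyperedge is an all-blue $r$-clique of $G_{\text{blue}}$, and (carrying along the empty bijection $\sigma$) all requirements hold except that $G$ need not be $F$-free. Then I would iterate the following repair as long as $G$ contains a copy of $F$: by the remark, this copy cannot lie entirely in $G_{\text{red}}$, so it uses a blue edge $\{u,v\}$; delete $\{u,v\}$ from $G$, and for each hyperedge that has just ceased to be an all-blue $r$-clique, move it into $E$, choose a pair inside it that is still blue, recolour that pair red, set it as the $\sigma$-image of the hyperedge, and recurse on any further all-blue cliques that this recolouring destroys. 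Throughout, the invariants are that $\sigma$ is a bijection from $E$ onto $E(G_{\text{red}})$ with $\sigma(h)\subseteq h$ and that every hyperedge outside $E$ is still an all-blue $r$-clique; in particular $|E|=e(G_{\text{red}})$ and $E(\mathcal{H})\setminus E\subseteq E(\mathcal{G}^r_{\text{blue}})$ are maintained. Each repair strictly decreases the number of copies of $F$ in the underlying graph of $G$ (deleting an edge can only destroy copies, and recolouring leaves the underlying graph unchanged), so the process terminates with an $F$-free $G$. Since the hyperedges outside $E$ are then pairwise-distinct $r$-cliques of $G_{\text{blue}}$, we obtain $|E(\mathcal{H})|=|E|+|E(\mathcal{H})\setminus E|\le e(G_{\text{red}})+N(K_r,G_{\text{blue}})=g_r(G)$.

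The point I expect to be the main obstacle is the compensation inside a repair step: deleting one blue edge can break many all-blue $r$-cliques simultaneously, and the ensuing recolourings can break still more, so one must show this cascade of hyperedges can be assigned \emph{pairwise-distinct} private pairs inside them, each obtained by recolouring a currently blue pair (so that the red-edge count and $|E|$ stay equal and the recursion does not run backwards). This is a Hall-type condition along the construction; making it work needs a careful choice of the order in which the broken hyperedges are processed, and it is here that the Berge-$F$-free hypothesis is used a second time, to rule out a genuine obstruction (too many hyperedges supported on too few pairs). Verifying this, together with the routine check that the final configuration satisfies every clause of the lemma, completes the proof.
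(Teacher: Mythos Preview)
The paper does not give a proof of this lemma: it is quoted with attribution to \cite{Gerbnerm2019}, and the text only adds that \cite{Kang2022} supplies an alternative argument. There is therefore no in-paper proof to compare your attempt against.

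Evaluated on its own, your proposal has a genuine gap precisely where you locate it. Even with a depth-first processing order, the cascade can stall: after you process one sibling of a broken hyperedge $h$ and descend through its subtree, that subtree may recolour several further pairs lying inside $h$, so that when you finally return to $h$ every one of its $\binom{r}{2}$ pairs is already non-blue. Concretely, run your procedure on the complete $3$-graph $K_5^{(3)}$ with $G=K_5$ and delete the pair $\{1,2\}$; after nine triples are processed the tenth has no blue pair left, and no ordering avoids this (ten triples cannot be injected into nine surviving pairs). Of course $K_5^{(3)}$ is not Berge-$F$-free for any $F\subseteq K_5$, so in that instance your while-loop never fires---which is exactly your intuition that the hypothesis must intervene ``a second time''. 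But you give no mechanism that converts a stuck cascade into a Berge copy of $F$, and calling it ``a Hall-type condition'' is not a proof: the bipartite system whose SDR you need is not fixed but evolves with each recolouring, and it is unclear how the global Berge-$F$-freeness constrains it.

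The argument in \cite{Gerbnerm2019} avoids the cascade altogether. One takes a \emph{maximum} matching $M$ in the bipartite incidence graph between $E(\mathcal{H})$ and $\binom{V(\mathcal{H})}{2}$ and lets $G$ be the graph whose edges are the $M$-saturated pairs; then $G$ is $F$-free immediately, since a copy of $F$ in $G$ would, via $M$, produce an injection $E(F)\to E(\mathcal{H})$ with $e\subseteq M(e)$, i.e.\ a Berge-$F$. Unmatched hyperedges are $r$-cliques in $G$ by maximality, and a short augmenting-path argument then yields the required red/blue colouring of $E(G)$. This route is both shorter and complete; if you wish to salvage your iterative scheme you would need to supply the missing link between a failed cascade and a forbidden Berge-$F$.
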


Kang, Ni, and Shan \cite{Kang2022} provided a new proof of the above lemma and also characterized the corresponding extremal graphs. We now present three lemmas due to Kang, Ni, and Shan \cite{Kang2022}.

\begin{lemma}[Kang, Ni, and Shan \cite{Kang2022}]\label{kang2022gr}
For positive integers $n,r$, we have
$$\max\{g_r(G):G\,\, \text{is}\,\, \text{a}\,\, \text{red-}\text{blue}\,\,\text{graph}\,\,\text{of}\,\,\text{complete}\,\,\text{graph}\,\,K_n\}=\max\left\{ {n\choose 2},{n \choose r} \right\}.$$
Furthermore, the equality holds if and only if all the edges of $K_n$ are blue when $r\leq n-3$, all the edges of $K_n$ are red when $r\geq n-1$, 
and all edges are either all blue or all red when $r=n-2$.
\end{lemma}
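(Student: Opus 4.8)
The plan is to reduce the statement to two elementary facts about $K_n$ together with one binomial identity. We may assume $r\ge 3$, since for $r=2$ one has $g_2(G)=e(G_{\text{red}})+e(G_{\text{blue}})=\binom{n}{2}$ for every red-blue coloring, so the maximum is trivially $\binom{n}{2}$. Fix a red-blue coloring of $K_n$, write $e_R:=e(G_{\text{red}})$, and let $X$ be the family of $r$-subsets of $V$ containing at least one red edge. An $r$-subset lies outside $X$ exactly when all its pairs are blue, i.e.\ when it spans a blue $K_r$, so
\[
g_r(G)=e_R+N(K_r,G_{\text{blue}})=e_R+\binom{n}{r}-|X|.
\]

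First I would bound $|X|$ from below by double counting: summing the number of red edges over all $r$-subsets $A$ of $V$ gives $e_R\binom{n-2}{r-2}$ (each red edge lies in $\binom{n-2}{r-2}$ such sets), while each $A$ contains at most $\binom{r}{2}$ red edges and only the sets in $X$ contain any, so $|X|\binom{r}{2}\ge e_R\binom{n-2}{r-2}$, i.e.\ $|X|\ge\lambda e_R$ with $\lambda:=\binom{n-2}{r-2}/\binom{r}{2}$. Next I would invoke the identity $\binom{n}{2}\binom{n-2}{r-2}=\binom{n}{r}\binom{r}{2}$ (both sides count a $2$-subset of $[n]$ together with a disjoint $(r-2)$-subset), which says precisely that $\binom{n}{r}=\lambda\binom{n}{2}$. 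Substituting the bound on $|X|$ into the displayed identity yields
\[
g_r(G)\le e_R+\binom{n}{r}-\lambda e_R=(1-\lambda)e_R+\lambda\binom{n}{2}.
\]
Since $0\le e_R\le\binom{n}{2}$, the right-hand side is a monotone linear function of $e_R$ whose values at the endpoints $e_R=0$ and $e_R=\binom{n}{2}$ are $\binom{n}{r}$ and $\binom{n}{2}$; hence $g_r(G)\le\max\{\binom{n}{2},\binom{n}{r}\}$. (The degenerate range $r>n$ is the case $\lambda=0$, where $g_r(G)=e_R\le\binom{n}{2}$.) Moreover $\lambda\ge 1\iff r\le n-2$, which is exactly the range in which $\binom{n}{r}\ge\binom{n}{2}$, so in each regime the bound collapses to the correct maximum.

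For the equality cases I would track which inequalities must be tight. Tightness of the double count forces every member of $X$ to be monochromatic red, and tightness of the last display forces $(1-\lambda)e_R=\max\{\binom{n}{2},\binom{n}{r}\}-\binom{n}{r}$. If $r\le n-3$ then $\lambda>1$, so $e_R=0$ and all edges are blue; conversely the all-blue coloring attains $\binom{n}{r}>\binom{n}{2}$. If $r\ge n-1$ then $\lambda<1$, so $e_R=\binom{n}{2}$ and all edges are red; conversely the all-red coloring attains $\binom{n}{2}>\binom{n}{r}$. The borderline $r=n-2$ has $\lambda=1$: the linear term vanishes and only the condition ``every $r$-set meeting the red graph is all red'' survives, and here I would run a short propagation argument --- a single red edge $\{u,w\}$, via the all-red $(n-2)$-sets through it (which exist since $n-2\ge 3$), forces every edge at $u$ and $w$ to be red, hence every edge red --- so the coloring is all-red or all-blue, both attaining $\binom{n}{2}=\binom{n}{r}$.

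I expect the only genuine work to be bookkeeping: checking that the chain of equalities in the uniqueness part is tight exactly as claimed, and carefully tracking the boundary values of $r$ relative to $n$ (notably the standing assumption $r\ge 3$, without which every coloring is extremal). The core estimate is short once the identity $\binom{n}{r}=\lambda\binom{n}{2}$ is noticed; the $r=n-2$ propagation step is the most hands-on piece.
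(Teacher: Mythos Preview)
The paper does not prove this lemma; it is quoted as a preliminary result from Kang, Ni, and Shan \cite{Kang2022}, so there is no in-paper argument to compare against. Your proof is correct and self-contained. The key identity $\binom{n}{2}\binom{n-2}{r-2}=\binom{n}{r}\binom{r}{2}$ makes the double-counting bound $|X|\ge\lambda e_R$ exactly what is needed to linearize $g_r(G)$ in $e_R$, and the endpoint analysis then gives the maximum together with a clean equality characterization. The propagation step for $r=n-2$ works as written: once one red edge $\{u,w\}$ exists, every $r$-set through it is forced red, hence every edge at $u$ (and at $w$) is red, and one more application of the same rule colours every remaining edge.

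One remark worth keeping explicit in a final write-up: as you note parenthetically, the equality characterization in the lemma's statement tacitly assumes $r\ge 3$. For $r=2$ one has $g_2(G)=\binom{n}{2}$ for \emph{every} colouring, so ``all blue'' is not the unique extremal configuration; this is a feature of the statement as quoted, not of your argument.
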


\begin{lemma}[Kang, Ni, and Shan \cite{Kang2022}]\label{kang2022}
For positive integers $p, r,\ell$ with $r\leq p-1$ and $0\leq \ell \leq p$,
$$\ell +{p-\ell \choose r-1}\leq {p \choose r-1},$$
with equality if and only if $r=2$, or $\ell =0$ and $r\geq 3$.
\end{lemma}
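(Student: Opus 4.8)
The plan is to separate the easy equality cases from the strict-inequality regime and dispatch each with elementary binomial estimates.

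First I would verify the two equality cases directly. If $r=2$, then ${p \choose r-1}={p \choose 1}=p$ while the left-hand side is $\ell+{p-\ell \choose 1}=\ell+(p-\ell)=p$, so equality holds for every admissible $\ell$. If $\ell=0$ (with $r$ arbitrary, subject to $r\le p-1$), both sides equal ${p \choose r-1}$. Hence the ``if'' direction of the equality characterization is immediate, and everything reduces to proving the \emph{strict} inequality $\ell+{p-\ell \choose r-1}<{p \choose r-1}$ whenever $r\ge 3$ and $1\le\ell\le p$.

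For this main case I would rewrite the target as ${p \choose r-1}-{p-\ell \choose r-1}>\ell$ and use monotonicity of ${m \choose r-1}$ in $m$: since $\ell\ge 1$ we have ${p-\ell \choose r-1}\le{p-1 \choose r-1}$, so by Pascal's rule
\[
{p \choose r-1}-{p-\ell \choose r-1}\ \ge\ {p \choose r-1}-{p-1 \choose r-1}\ =\ {p-1 \choose r-2}.
\]
From $r\ge 3$ and $r\le p-1$ we get $1\le r-2\le p-2$, so ${p-1 \choose r-2}\ge{p-1 \choose 1}=p-1$ (any interior entry of row $p-1$ of Pascal's triangle dominates the second entry). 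Thus if $\ell\le p-2$ we are done, because ${p-1 \choose r-2}\ge p-1>\ell$. The remaining values $\ell\in\{p-1,p\}$ I would handle by hand: there $p-\ell\in\{0,1\}$, and since $r-1\ge 2$ the term ${p-\ell \choose r-1}$ vanishes, so the claim becomes $\ell<{p \choose r-1}$; as $2\le r-1\le p-2$, unimodality and symmetry of row $p$ give ${p \choose r-1}\ge{p \choose 2}=\frac{p(p-1)}{2}$, which exceeds $p\ge\ell$ because $p\ge r+1\ge 4$.

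I do not expect a genuine obstacle here, since the content is a single-row binomial estimate. The one delicate point --- and the reason a clean ``bound everything by the ${p-1 \choose r-2}$ term'' argument does not literally close the proof --- is the off-by-one at the extreme values $\ell=p-1,p$, which forces the short separate argument above; care is also needed to keep every monotonicity and unimodality step inside the range where it is valid, and that is precisely where the hypotheses $r\le p-1$ and $r\ge 3$ are consumed.
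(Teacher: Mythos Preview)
Your proof is correct. The equality cases are immediate, and for $r\ge 3$, $1\le\ell\le p$ your split into $\ell\le p-2$ (handled via $\binom{p}{r-1}-\binom{p-\ell}{r-1}\ge\binom{p-1}{r-2}\ge p-1>\ell$) and $\ell\in\{p-1,p\}$ (handled via $\binom{p}{r-1}\ge\binom{p}{2}>p\ge\ell$, using $p\ge r+1\ge 4$) is clean and airtight; the hypotheses $r\ge 3$ and $r\le p-1$ are used exactly where needed to keep the unimodality bounds valid.

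There is nothing to compare against: the paper does not prove this lemma but merely quotes it from Kang, Ni, and Shan~\cite{Kang2022}. Your argument is a self-contained elementary verification and would serve perfectly well as an inline proof.
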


\begin{lemma}[Kang, Ni, and Shan \cite{Kang2022}]\label{kang2022max}
For positive integers $p, r,\ell$ with $\ell\leq p$,
$$\ell +{p-\ell \choose r-1}\leq \max\left\{ {p\choose r-1},p \right\}.$$
\end{lemma}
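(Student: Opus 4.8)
The final statement to prove is Lemma \ref{kang2022max}: for positive integers $p,r,\ell$ with $\ell\le p$, we have $\ell + \binom{p-\ell}{r-1} \le \max\{\binom{p}{r-1},\,p\}$. I will treat this as a short auxiliary estimate and split the argument according to the size of $r$ relative to $p$, since the binomial coefficient $\binom{p}{r-1}$ behaves very differently in the ranges $r-1 \le 1$, $2 \le r-1 \le p-1$, and $r-1 \ge p$.

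\medskip

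\textbf{Step 1: the easy boundary cases for $r$.}
First I would dispose of small and large $r$. If $r = 1$, then $\binom{p-\ell}{0} = 1$ and $\binom{p}{0} = 1$, so the claim reads $\ell + 1 \le \max\{1, p\} = p$ (using $p \ge 1$), which holds because $\ell \le p-1$ unless $\ell = p$, and in the latter case $p+1 \le p$ fails — so actually I must be slightly careful and note that $\binom{0}{0}=1$ forces checking $\ell=p$ separately: $\ell + \binom{p-\ell}{r-1} = p + \binom{0}{r-1}$, which is $p+1$ if $r=1$ and $p$ if $r\ge 2$. Hmm — so for $r=1,\ell=p$ the inequality as literally stated would need $p+1\le\max\{1,p\}$, which is false; thus I expect the intended reading has $r\ge 2$ implicitly (consistent with Lemma \ref{kang2022}, where $r\le p-1$ forces $r\ge 2$ when combined with $\ell\le p$, and with the hypothesis $r\le k-1$ throughout the paper). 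I would state the proof for $r \ge 2$. Then if $r-1 \ge p$, i.e. $r \ge p+1$, the term $\binom{p-\ell}{r-1}$ is $0$ whenever $\ell \ge 1$ (since $p-\ell < r-1$) and equals $\binom{p}{r-1}$ when $\ell = 0$; in either case $\ell + \binom{p-\ell}{r-1} \le \max\{p, \binom{p}{r-1}\}$ trivially (if $\ell\ge 1$ it is $\le \ell \le p$ when $p-\ell<r-1$, after noting $\binom{p-\ell}{r-1}=0$; if $\ell=0$ it equals $\binom{p}{r-1}$).

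\medskip

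\textbf{Step 2: the main range $2 \le r \le p$.}
This is the heart of the matter. Here I want to show $\ell + \binom{p-\ell}{r-1} \le \binom{p}{r-1}$, which is even stronger than needed. The cleanest route is induction on $\ell$. For $\ell = 0$ it is an equality. For the inductive step, it suffices to show that decreasing $\ell$ by $1$ decreases the left side, i.e.
\[
\binom{p-\ell+1}{r-1} - \binom{p-\ell}{r-1} \ge 1 \qquad \text{for } 1 \le \ell \le p,
\]
because then $\ell + \binom{p-\ell}{r-1} \le (\ell-1) + \binom{p-\ell+1}{r-1} \le \cdots \le \binom{p}{r-1}$. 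By Pascal's rule the left side equals $\binom{p-\ell}{r-2}$, and since $0 \le p-\ell \le p-1$ and $r-2 \ge 0$, this binomial coefficient is $\ge 1$ exactly when $p-\ell \ge r-2$. So the induction goes through as long as $p - \ell \ge r-2$. It remains to handle the residual range $p-\ell < r-2$, i.e. $p-\ell \le r-3$: there $\binom{p-\ell}{r-1} = 0$, so the left side is just $\ell \le p \le \max\{\binom{p}{r-1}, p\}$. Assembling Step 1 and Step 2 completes the proof; the only genuinely delicate point — and the one I would flag — is getting the edge conventions right ($\binom{0}{0}=1$, $\binom{m}{j}=0$ for $j>m\ge 0$) and confirming that the implicit hypothesis $r\ge 2$ (inherited from the paper's standing assumptions) is what makes the clean bound $\max\{\binom{p}{r-1},p\}$ correct.
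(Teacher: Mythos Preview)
The paper does not supply a proof of this lemma; it is quoted from Kang, Ni, and Shan \cite{Kang2022} without argument, so there is no proof in the present paper to compare against.

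On its own merits your argument is correct for $r\ge 2$. The telescoping via Pascal's rule,
\[
\binom{p-\ell+1}{r-1}-\binom{p-\ell}{r-1}=\binom{p-\ell}{r-2}\ge 1 \quad\text{whenever } p-\ell\ge r-2,
\]
yields $\ell+\binom{p-\ell}{r-1}\le \binom{p}{r-1}$ for all $0\le \ell\le p-r+2$; for $\ell\ge p-r+3$ the binomial term vanishes and the left side is just $\ell\le p$; and the case $r\ge p+1$ is disposed of directly since again $\binom{p-\ell}{r-1}=0$ for $\ell\ge 1$. Your observation that the lemma as literally stated fails when $r=1$ (take $\ell=p$, giving $p+1\le \max\{1,p\}$) is valid and worth flagging: the implicit hypothesis $r\ge 2$ is indeed what is intended and is consistent with every application in the paper. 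One minor wording slip: you write ``decreasing $\ell$ by $1$ decreases the left side'', but your displayed chain $\ell+\binom{p-\ell}{r-1}\le (\ell-1)+\binom{p-\ell+1}{r-1}\le\cdots$ correctly expresses that decreasing $\ell$ \emph{increases} the left side; the mathematics is right even though the sentence is inverted.
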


\section{Proofs of Theorem \ref{stabilitytheorem-for-bergematching}}

We begin with a key lemma that will be useful in the proof of 
Theorem~\ref{stabilitytheorem-for-bergematching}.

\begin{lemma}\label{mainlemma}
Given a graph $H(n,k,t,\vec{c})$, we have
\begin{equation}\label{mainlemma:eq}
\max \Big\{ g_r(G) : G \text{ is a red-blue graph of } H(n,k,t,\vec{c}) \Big\} 
\le \max \big\{ h_r(n,k,t), f_r(n,k,t) \big\}.
\end{equation}
Moreover, if the equality holds, then $H(n,k,t,\vec{c}) = H(n,k,t)$. 
If, in addition, $3 \le r \le t-1$, then equality implies that 
$H(n,k,t,\vec{c}) = H(n,k,t)$ and all edges of $H(n,k,t)$ are blue.
\end{lemma}

\begin{proof}
Recall that $H(n,k,t,\vec{c})$ is the $n$-vertex graph $K_t + K(\vec{c})$, 
where $K(\vec{c})$ is the graph consisting of cliques of order $c_1, \dots, c_m$.
Let $C_1, \dots, C_m$ be the connected components of $K(\vec{c})$ in $G$, 
where $|C_i| = c_i$. 

Let $G$ be a red-blue graph of $H(n,k,t,\vec{c})$ that maximizes $g_r(G)$, i.e.,
\[
g_r(G) = \max \{ g_r(G') : G' \text{ is a red-blue graph of } H(n,k,t,\vec{c}) \}.
\]
We will prove that
\begin{equation}\label{mainlemma:pf:eq1}
g_r(G) \le \max \{ h_r(n,k,t), f_r(n,k,t) \}.
\end{equation}

We consider two cases according to the value of $r$.

\medskip

{\bf Case 1.} $c_1+t\geq r+2$.

\medskip

Let $G^{\prime}$ be the graph obtained from $G$ by coloring all edges of 
$G[V(C_1) \cup V(K_t)]$ blue, and keeping the colors of all remaining edges 
the same as in $G$. 
It follows from Lemma~\ref{kang2022gr} that
\begin{align*}
g_r(G^{\prime}) - g_r(G) 
&\ge g_r\big(G^{\prime}[V(C_1) \cup V(K_t)]\big) - g_r\big(G[V(C_1) \cup V(K_t)]\big) \\
&\ge {c_1+t \choose r} - \max \Big\{ {c_1+t \choose 2}, {c_1+t \choose r} \Big\} \\
&= 0.
\end{align*}
The last equality holds because ${c_1+t \choose r} \ge {c_1+t \choose 2}$ for $c_1+t \ge r+2$.

If $c_i>1$ for some $i\geq 2$, let $a,b \in C_i$ be two vertices. 
Let $B_{a,b}$ be any set of $c_i - 2$ vertices in $C_1$. 
We construct $G_1$ from $G'$ by performing the following steps (see Figure~\ref{fig:2}):
\begin{itemize}
    \item[{\bf Step 1.}] Connect both $a$ and $b$ to $B_{a,b}$, using the same colors as the original edges between $\{a,b\}$ and $C_i \setminus \{a,b\}$;
    \item[{\bf Step 2.}] Connect both $a$ and $b$ to $V(C_1) \setminus B_{a,b}$ and color these new edges red;
    \item[{\bf Step 3.}] Delete all edges between $\{a,b\}$ and $C_i \setminus \{a,b\}$ (i.e., the edges in $C_i$ incident to $a$ or $b$).
\end{itemize}
All other edges retain the same colors as in $G'$.

\begin{figure}[htbp]
    \centering 
    \includegraphics[width=0.7\textwidth]{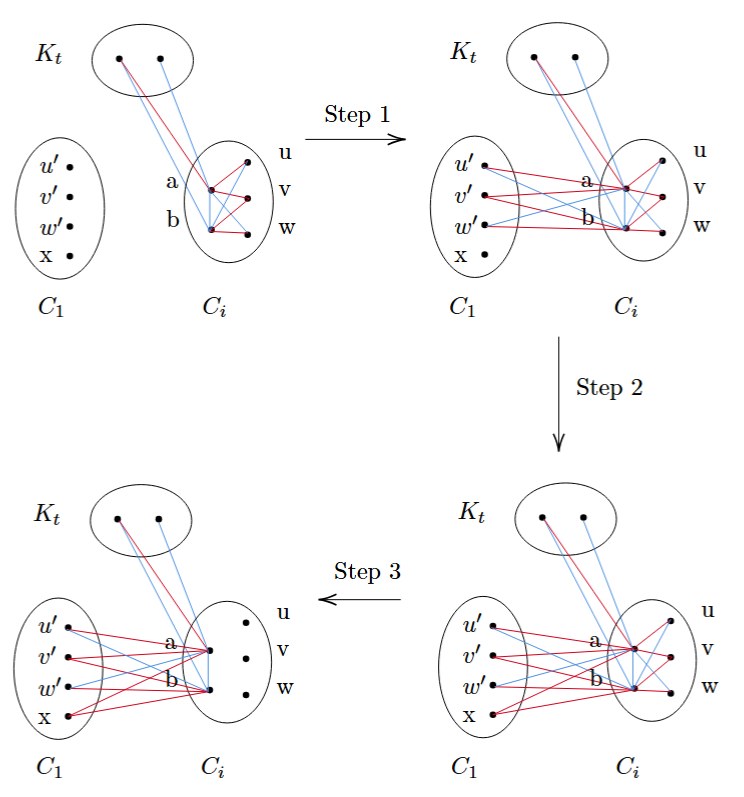} 
    \caption{$G^{\prime} \to G_{1}$} 
    \label{fig:2} 
\end{figure}

Observe that during this process, the decrease in $g_r(\cdot)$ caused by edge deletions (Step~3) is compensated by the increase from the added edges (Steps~1). 
Moreover, since $|V(C_1) \setminus B_{a,b}| = c_1 - (c_i - 2) > 0$, Step~2 strictly increases $g_r(\cdot)$. 
Hence, we have
\[
g_r(G') < g_r(G_1).
\]

Overall, this procedure effectively transfers two vertices from $C_i$ (if $c_i>1$ for some $i\geq 2$) to $C_1$, 
while ensuring that $g_r(\cdot)$ strictly increases throughout the transformation from $G$ to $G'$ to $G_1$.

By iterating the above process repeatedly, we obtain a sequence
\[
G \to G' \to G_1 \to G_1' \to G_2 \to \cdots \to G_\ell
\]
such that
\[
g_r(G) \le g_r(G') < g_r(G_1) \le g_r(G_1') < g_r(G_2) \le \cdots \le g_r(G_{\ell-1}') < g_r(G_\ell),
\]
where $\ell = \sum_{i=2}^{m} \lfloor c_i/2 \rfloor$ and $G_\ell$ is a red-blue graph of $H(n,k,t)$.
Thus,
\begin{equation}\label{mainlemma:pf:case1:eq1}
g_r(G) \le \max \big\{ g_r(F) : F \text{ is a red-blue graph of } H(n,k,t) \big\}.
\end{equation}
Moreover, if equality in (\ref{mainlemma:pf:case1:eq1}) holds, then $c_i=1$ for any $i\geq 2$,
which implies that $G$ is a red-blue graph of $H(n,k,t)$, 
So, $H(n,k,t,\vec{c}) = H(n,k,t)$.

\medskip

Next, we prove the following Claim.

\begin{claim}\label{mainlemma:pf:claim}
\begin{equation}\label{mainlemma:pf:case1:eq2}
\max \big\{ g_r(F) : F \text{ is a red-blue graph of } H(n,k,t) \big\} 
\le \max \big\{ h_r(n,k,t), f_r(n,k,t) \big\}.
\end{equation}
\end{claim}

\begin{proof}
Let $F$ be a red-blue graph of
\[
H(n,k,t) = K_t + \big(K_{2k+1-2t} \cup (n-2k-1+t)K_1 \big).
\]
Let $U = V(K_t)$, $V = V(K_{2k+1-2t})$, and $W = V(H(n,k,t)) \setminus (U \cup V)$. 
For each $w_i \in W$, denote by $\ell_i$ the number of red edges incident to $w_i$ in $F$. 
Then the number of blue $r$-cliques containing $w_i$ in $F$ is at most ${t-\ell_i \choose r-1}$. 
Since $F[U \cup V]$ is a clique of size $2k+1-t$ and $r \le 2k-1-t$, it follows from Lemma~\ref{kang2022gr} that
\[
g_r(F[U \cup V]) \le \max \Big\{ {2k+1-t \choose 2}, {2k+1-t \choose r} \Big\} \le {2k+1-t \choose r}.
\]
Therefore, by the definitions of $U$, $V$, and $W$, we obtain
\begin{eqnarray*}
g_r(F)&\leq& g_r(F[U\cup V])+\sum\limits_{w_i\in W}\left(\ell_i+{t-\ell_i\choose r-1}\right)\\
&\leq& {2k+1-t \choose r}+\sum\limits_{w_i\in W}\max\left\{{t\choose r-1},t \right\}\\
&\leq& {2k+1-t \choose r}+(n-2k-1+t)\cdot \max\left\{{t\choose r-1},t \right\}\\
&=&\max\left\{h_r(n,k,t),f_r(n,k,t) \right\}.
\end{eqnarray*}
The second inequality follows from Lemma \ref{kang2022max}.
By the arbitrariness of $F$, we deduce that (\ref{mainlemma:pf:case1:eq2}) holds. 
\end{proof}

It then follows from (\ref{mainlemma:pf:case1:eq1}) and (\ref{mainlemma:pf:case1:eq2}) that (\ref{mainlemma:pf:eq1}) holds.  
Furthermore, if equality in (\ref{mainlemma:eq}) holds, then equality in (\ref{mainlemma:pf:case1:eq1}) also holds. 
Hence, $H(n,k,t,\vec{c}) = H(n,k,t)$. 
If $3 \le r \le t-1$, then $r \le c_1 + t - 2$. 
Thus, as the same as the proof of Claim \ref{mainlemma:pf:claim}, we further have
\begin{eqnarray*}
g_r(F)&\leq& g_r(F[U\cup V])+\sum\limits_{w_i\in W}\left(\ell_i+{t-\ell_i\choose r-1}\right)\\
&\leq& {2k+1-t \choose r}+\sum\limits_{w_i\in W}{t\choose r-1}\\
&\leq& {2k+1-t \choose r}+(n-2k-1+t)\cdot{t\choose r-1}\\
&=&h_r(n,k,t).
\end{eqnarray*}
The second inequality is due to Lemma \ref{kang2022} under the condition $3 \le r \le t-1$.
We deduce that if equality in (\ref{mainlemma:eq}) holds, then
\begin{eqnarray}\label{mainlemma:pf:case1:eq3}
\sum_{w_i \in W} \Big( \ell_i + {t-\ell_i \choose r-1} \Big) = (n-2k-1+t) \cdot {t \choose r-1} \text{ and }
g_r(F[U \cup V]) = {2k+1-t \choose r}.    
\end{eqnarray}
Thus, $\ell_i + {t-\ell_i \choose r-1} = {t \choose r-1}$ for every $w_i$. 
Note that $3 \le r \le t-1$.
Applying Lemma~\ref{kang2022}, we obtain that $\ell_i = 0$ for all $w_i$ and all edges of $F[U]$ are blue.
Moreover, all edges of $F[U \cup V]$ are blue by Lemma~\ref{kang2022gr} together with (\ref{mainlemma:pf:case1:eq3}). 
In summary, all edges of $H(n,k,t)$ are colored blue.

\medskip

{\bf Case 2.} $c_1+t<r+2$.

\medskip

Since $r \le k-1$ and $c_1 + t < r+2$, we conclude that $t < k$ 
(otherwise, if $t=k$, then $c_1=1$ and $r>k-1$, a contradiction). 
Hence, we deduce that $c_1 \ge 3$ and thus $r > c_1 + t - 2 \ge t + 1$.

Note that the number of $r$-cliques in $G[V(C_i)\cup V(K_t)]$ is at most ${c_i+t \choose r}$. 
Since $3 \le c_1+t < r+2$, it follows that
\[
{c_i+t \choose r} \le 
\begin{cases} 
c_i t, & \text{if } t \neq 0,\\
{c_i+t \choose 2}, & \text{if } t=0.
\end{cases}
\]
Hence, $g_r(G) \le N(K_2, H(n,k,t,\vec{c}))$, 
i.e., all edges of $G$ are red when $r \ge 3$, 
and each edge is either red or blue when $r=2$.

From the definitions of $H(n,k,t,\vec{c})$ and $H(n,k,t)$, it can be readily verified that
\[
N(K_2, H(n,k,t,\vec{c})) \le N(K_2, H(n,k,t))= h_2(n,k,t).
\]
Since $r \le k-1 \le 2k-1-t$, we have ${2k+1-t \choose 2} \le {2k+1-t \choose r}$. 
Then $h_2(n,k,t) \le f_r(n,k,t)$. 
Thus, $g_r(G) \le f_r(n,k,t)$ and (\ref{mainlemma:eq}) holds.

An easy observation shows that if (\ref{mainlemma:eq}) becomes an equality, 
then $N(K_2, H(n,k,t,\vec{c})) = N(K_2, H(n,k,t))$, 
which means that $H(n,k,t,\vec{c}) = H(n,k,t)$.

In both cases, the proof is complete.
\end{proof}

{\bf Proof of Theorem \ref{stabilitytheorem-for-bergematching}:}
If $r=2$, let $G = \mathcal{H}$ and we color all edges of $G$ blue. 
Then $g_r(G) = \vert E(\mathcal{H})\vert$. 
If $r \ge 3$, by Lemma~\ref{gerbnerm2019}, we can construct an $M_{k+1}$-free red-blue graph $G$ 
and a hyperedge set $E \subseteq E(\mathcal{H})$ such that
\[
\vert E\vert = e(G_{\text{red}}), \quad 
E(\mathcal{H}) \setminus E \subseteq E(\mathcal{G}^r_{\text{blue}}), \quad 
\text{and} \quad \vert E(\mathcal{H})\vert \le g_r(G).
\]
Thus,
\begin{equation}\label{stabilitytheorem-for-bergematching:pf:eq1}
g_r(G) \ge \vert E(\mathcal{H})\vert > \max\{h_r(n,k,p), h_r(n,k,k-q), f_r(n,k,p), f_r(n,k,k-q)\}.
\end{equation}

By Theorem~\ref{Tutte-Berge Theorem}, there exists a set $T \subseteq V(G)$ 
such that each connected component $C_1, \dots, C_m$ of $G \setminus T$ is odd and
\[
\vert T\vert + \sum_{i=1}^{m} \left\lfloor \frac{\vert V(C_i)\vert}{2} \right\rfloor \le k.
\]
Let $t = \vert T\vert$ and $c_i = \vert V(C_i)\vert$. 
With loss of generality, we label the components so that $c_1 \ge \cdots \ge c_m$. 
Clearly, $0 \le t \le k$. 
Moreover, 
\[
\alpha(\vec{c}) = \sum_{i=1}^{m} c_i = n - t, \quad
\beta(\vec{c}) = \sum_{i=1}^{m} \left\lfloor \frac{c_i}{2} \right\rfloor \le k - t,
\]
and each $c_i$ is odd for $1 \le i \le m$.

If $\beta(\vec{c}) = k-t$, then $G \setminus T$ is a subgraph of $K(\vec{c})$ and $G$ is a subgraph of $H(n,k,t,\vec{c})$.
We will therefore consider the case $\beta(\vec{c}) < k-t$.
Note that
\[
m = \alpha(\vec{c}) - 2\beta(\vec{c}) = n - t - 2\beta(\vec{c}) 
\ge 2k + 2 - t - 2\beta(\vec{c}) \ge 2\bigl[(k-t) - \beta(\vec{c})\bigr] + 2.
\]
Let 
\[
G' = \bigl(C_1 + C_2 + \cdots + C_{2\cdot[(k-t)-\beta(\vec{c})]} + C_{2\cdot[(k-t)-\beta(\vec{c})]+1}\bigr) 
\cup C_{2\cdot[(k-t)-\beta(\vec{c})]+2} \cup \cdots \cup C_m
\]
and
\[
\vec{c}^{\,\prime} = \bigl(c_1 + c_2 + \cdots + c_{2\cdot[(k-t)-\beta(\vec{c})]} + c_{2\cdot[(k-t)-\beta(\vec{c})]+1}, \,
c_{2\cdot[(k-t)-\beta(\vec{c})]+2}, \dots, c_m\bigr).
\]
It is easy to check that $G \setminus T \subseteq G'$, $\alpha(\vec{c}^{\,\prime}) = n-t$, $\beta(\vec{c}^{\,\prime}) = k-t$, 
and each component in $\vec{c}^{\,\prime}$ is an odd number. 
Thus, $G'$ is a subgraph of $K(\vec{c}^{\,\prime})$, and $G$ is a subgraph of $H(n,k,t,\vec{c}^{\,\prime})$. 
Therefore, without loss of generality, we may assume that $G$ is a subgraph of $H(n,k,t,\vec{c})$ with $\beta(\vec{c}) = k-t$.

The rest of this proof only needs to show that $0 \leq t \leq p-1$ or $k-q+1 \leq t \leq k$.
If $p \leq t \leq k-q$, then using the convexity of $h_r(n,k,t)$ and $f_r(n,k,t)$, we conclude from Lemma \ref{mainlemma} that
\begin{eqnarray*}
g_r(G) &\leq& \max\{h_r(n,k,t), f_r(n,k,t)\} \\
&\leq& \max\{h_r(n,k,p), h_r(n,k,k-q), f_r(n,k,p), f_r(n,k,k-q)\},
\end{eqnarray*}
which contradicts (\ref{stabilitytheorem-for-bergematching:pf:eq1}). 
Hence, $0 \leq t \leq p-1$ or $k-q+1 \leq t \leq k$.
Namely, we can construct a graph $G$ that satisfies the requirements.
\hfill \rule{3mm}{3mm}\par\medskip

{\bf Proof of Theorem \ref{berge-matching} using Theorem \ref{stabilitytheorem-for-bergematching}:}
Since both $\mathcal{H}^r(n,k,0)$ and $\mathcal{H}^r(n,k,k)$ are $\mbox{Berge-}M_{k+1}$-free,
we have 
$$
\mathrm{ex}_r(n,\mbox{Berge-}M_{k+1})\geq 
\max\left\{ h_r(n,k,0),h_r\left(n,k,k\right) \right\}.
$$
Let $\mathcal{H}$ be an extremal $r$-graph on $n$ vertices with respect to $\mbox{Berge-}M_{k+1}$.
That is,
\begin{eqnarray}\label{pf:Theorem1.4:eq1}
E(\mathcal{H})\geq \max\left\{ h_r(n,k,0),h_r\left(n,k,k\right) \right\}.
\end{eqnarray}

By the definitions and the convexity of $h_r(n,k,t)$ and $f_r(n,k,t)$, we have
\begin{eqnarray*}
&&\max\left\{ h_r(n,k,0),h_r\left(n,k,k\right) \right\}\\
&=&\max\left\{ h_r(n,k,0),h_r\left(n,k,k\right),f_r(n,k,0),f_r\left(n,k,k\right) \right\}\\
&>& \max\left\{ h_r(n,k,1),h_r\left(n,k,k-1\right),f_r(n,k,1),f_r\left(n,k,k-1\right) \right\}.
\end{eqnarray*}
It follows from Theorem \ref{stabilitytheorem-for-bergematching} that 
there exists a hyperedge set $E\subseteq E(\mathcal{H})$ and an $M_{k+1}$-free red-blue graph $G$, 
which is a subgraph of $H(n,k,0,\vec{c})$ or $H(n,k,k,\vec{c})=H(n,k,k)$,
such that $\vert E\vert = e(G_{\text{red}})$, $E(\mathcal{H}) \setminus E\subseteq E(\mathcal{G}^r_{\text{blue}})$, and 
$E(\mathcal{H})\leq g_r(G)$.
In particular, $G = \mathcal{H}$ when $r=2$.

Furthermore, if $G$ is a subgraph of $H(n,k,0,\vec{c})$, then by (\ref{pf:Theorem1.4:eq1})
we conclude that $G$ is a subgraph of $H(n,k,0)$.
Thus, $G$ is a subgraph of either $H(n,k,0)$ or $H(n,k,k)$.
By Lemma \ref{mainlemma}, we have 
$$ 
g_r(G)\leq \max\left\{ h_r(n,k,0),h_r\left(n,k,k\right) \right\}.
$$
Combining this with (\ref{pf:Theorem1.4:eq1}), we get
$$
E(\mathcal{H})= g_r(G)= \max\left\{ h_r(n,k,0),h_r\left(n,k,k\right) \right\}.
$$

\medskip

If $r=2$, then $\mathcal{H}=G\in \{H(n,k,0),H(n,k,k)\}$.
If $3\leq r\leq k-1$, then all edges of $G$ are blue by Lemma \ref{mainlemma}.
Therefore, $\mathcal{H}\in \{\mathcal{H}^r(n,k,0),\mathcal{H}^r(n,k,k)\}$.
\hfill \rule{3mm}{3mm}\par\medskip

\section{Proofs of Theorem \ref{stabilitytheorem-for-bergematching-sufficiently} and Corollary \ref{corollary}}
\textbf{Proof of Theorem \ref{stabilitytheorem-for-bergematching-sufficiently}.}
Since $n$ is sufficiently large, we have
\[
h_r(n,k,k-q)=\max\{h_r(n,k,0),\, h_r(n,k,k-q),\, f_r(n,k,0),\, f_r(n,k,k-q)\}.
\]
By Theorem \ref{stabilitytheorem-for-bergematching},
there exist a hyperedge set $E\subseteq E(\mathcal{H})$ and an $M_{k+1}$-free red-blue graph $G$, that is a subgraph of $H(n,k,t,\vec{c})$ for some $(t,\vec{c})$ satisfying $k-q+1\le t\le k$
such that $|E|=e(G_{\text{red}})$ and $E(\mathcal{H})\setminus E \subseteq E(\mathcal{G}^r_{\text{blue}})$.
Moreover, $G=\mathcal{H}$ and $E=\emptyset$ when $r=2$.
Let $N_{\mathcal{H}}(K_r,G_{\text{blue}})$ denote the number of $r$-cliques in $G_{\text{blue}}$ that can be formed from the edges in $E(\mathcal{H})$.
Thus,
\[
|E(\mathcal{H})|
  = e(G_{\text{red}})+N_{\mathcal{H}}(K_r,G_{\text{blue}})
  \le e(G_{\text{red}}) + N(K_r, G_{\text{blue}})=g_r(G).
\]

Given a graph $H(n,k,t,\vec{c})$, let 
$m'=\max\{\, i : c_i \ge 2 \,\}$, 
$U = V(K_t)$, 
$V = V\!\left(\bigcup_{i=1}^{m'} K_{c_i}\right)$, 
and 
$W = V\!\left(\bigcup_{i=m'+1}^{m} K_{c_i}\right)$.
Let $S$ be the subset of $U$ obtained by iteratively removing vertices until 
the remaining set satisfies the property that each vertex $s \in S$ is contained in at least
\[
\left(r(k+1)+t+1\right)\binom{t}{r-1}
\]
distinct blue $r$-cliques in $G[S \cup W]$ that can be formed from the edges in $E(\mathcal{H})$.
Obviously, $\vert S\vert \leq \vert U\vert\leq t\leq k$.
Without loss of generality, assume that $|S|=\gamma$, 
$S = \{u_1,\ldots,u_{\gamma}\}$, 
and 
$U \setminus S = \{u_{\gamma+1},\ldots,u_{t}\}$.

\medskip

{\bf Claim. }  $\gamma \geq k-q$.

\medskip

\begin{proof}
Otherwise, we suppose $\gamma \le k - q - 1$.
Define $$g^{\prime}_r(G) = e(G_{\text{red}}) + N_{\mathcal{H}}(K_r, G_{\text{blue}}).$$ 
Hence, $\lvert E(\mathcal{H}) \rvert = g^{\prime}_r(G) \le g_r(G)$.
We consider the following two cases.

\medskip

\textbf{Case 1. } $3 \le r \le k - q - 1$.

\medskip

We estimate $g^{\prime}_r(G)$ by decomposing it into three parts.

\textbf{Part 1:} Estimate of $g^{\prime}_r(G[U \cup V])$.
Since $\lvert U \cup V \rvert = O(1)$, it follows that $g^{\prime}_r(G[U \cup V]) \le g_r(G[U \cup V]) = O(1)$.

\medskip

\textbf{Part 2:} Contribution from edges between $W$ and $U \setminus S$.
Recall that $W$ is an independent set.
For each vertex $u_i \in U \setminus S$, the number of blue $r$-cliques containing $u_i$ that may arise from edges in $E(\mathcal H)$ is bounded above by
\[
\bigl(r(k+1) + t + 1\bigr)\binom{t}{r-1} - 1.
\]
Let $\eta_i$ denote the number of red edges joining $u_i$ to vertices in $W$ for $\gamma + 1 \le i \le t$.
Therefore, the total contribution of vertices in $U \setminus S$ to $g'_r(G)$ is at most
\[
\sum_{u_i \in U \setminus S}
\left[
\bigl(r(k+1) + t + 1\bigr)\binom{t}{r-1}
- 1 + \eta_i
\right].
\]

\medskip

\textbf{Part 3:} Contribution from vertices in $W$ that do not involve any vertex from $U \setminus S$.
For each $w_i \in W$, let $\ell_i$ be the number of red edges between $w_i$ and vertices in $S$.
Then the number of blue $r$-cliques containing $w_i$ but avoiding all vertices from $U \setminus S$ is at most
\[
\binom{\gamma - \ell_i}{r - 1}.
\]
Hence, the total contribution to $g'_r(G)$ from those portions of $W$ not incident to $U \setminus S$ is bounded by
\[
\sum_{w_i \in W}
\left(
\ell_i + \binom{\gamma - \ell_i}{r - 1}
\right).
\]

Therefore, easy calculations show that 
\begin{eqnarray*}
g^{\prime}_r(G)&\leq& \sum_{u_i\in U\setminus S}\left[\left(r(k+1)+t+1\right)\cdot{t \choose r-1}-1+\eta_i\right]\\
&&+\sum_{w_i\in W}\left({\ell_i+{\gamma-\ell_i\choose r-1}}\right)+O(1)\nonumber\\
&\leq&\sum_{u_i\in U\setminus S}\eta_i+\sum_{w_i\in W}\left({\ell_i+{\gamma-\ell_i\choose r-1}}\right)+O(1)\nonumber\\
&\leq&(t-\gamma)n+\sum_{w_i\in W}\left({\ell_i+{\gamma-\ell_i\choose r-1}}\right)+O(1)\\
&\leq& (t-\gamma)n+\sum_{w_i\in W}\max\left\{{\gamma\choose r-1},\gamma\right\}+O(1)\nonumber\\
&\leq& (t-\gamma)n+n\cdot \max\left\{{\gamma\choose r-1},\gamma\right\}+O(1)\nonumber\\
&=&n\cdot\max\left\{{\gamma\choose r-1}+t-\gamma,t\right\}+O(1).
\end{eqnarray*}
The fourth inequality is based on Lemma \ref{kang2022max}.
It is easy to check that $\binom{\gamma}{r-1} - \gamma$ is a convex function of $\gamma$, 
so we have
\begin{eqnarray*}\label{pf:Theorem1.5:claim:case1:eq1}
\binom{\gamma}{r-1} - \gamma \leq \binom{k-q-1}{r-1} - (k-q-1).
\end{eqnarray*}
Since $t\leq k$, $3 \leq r \leq k-q-1$, and $k \geq 2q+3$, we get
\begin{eqnarray}\label{pf:Theorem1.5:claim:case1:eq2}
\binom{\gamma}{r-1} - \gamma+ t  \leq \binom{k-q-1}{r-1} - (k-q-1) + t \leq \binom{k-q-1}{r-1} + q + 1 < \binom{k-q}{r-1}
\end{eqnarray}
and
\begin{eqnarray}\label{pf:Theorem1.5:claim:case1:eq4}
\binom{k-q}{r-1} \geq \frac{(k-q)(k-q-1)}{2} \geq \left( k - \frac{k-3}{2} \right) \cdot \frac{q+2}{2} > k \geq t.
\end{eqnarray}
Combining (\ref{pf:Theorem1.5:claim:case1:eq2}) and (\ref{pf:Theorem1.5:claim:case1:eq4}), 
we obtain
\[
\max\left\{ \binom{\gamma}{r-1} + t - \gamma, t \right\} \leq \binom{k-q}{r-1} - 1.
\]
Hence,
\[
g^{\prime}_r(G) \leq n \left( \binom{k-q}{r-1} - 1 \right) + O(1),
\]
which is a contradiction to $g^{\prime}_r(G)=\vert E(\mathcal{H})\vert>h_r(n,k,k-q)$ for sufficiently large $n$.

\medskip

{\bf Case 2. }  $r=2$.

\medskip

Then $G = \mathcal{H}$ and $E = \emptyset$.
As the proof proceeds similarly to Case 1, we have
\begin{eqnarray*}
g^{\prime}_r(G) \leq \sum_{u_i \in U \setminus S} \eta_i 
+ \sum_{w_i \in W} \left( \ell_i + \binom{\gamma-\ell_i}{r-1} \right) + O(1).
\end{eqnarray*}
Since $E = \emptyset$, it follows that $\eta_i = 0$ and $\ell_i = 0$. 
Therefore,
\begin{eqnarray*}
g^{\prime}_r(G) \leq \sum_{w_i \in W} \binom{\gamma}{r-1} + O(1)
\leq n \binom{\gamma}{r-1} + O(1)
\leq n \binom{k-q-1}{r-1} + O(1).
\end{eqnarray*}
This yields a contradiction to the assumption that $|E(\mathcal{H})| = g^{\prime}_r(G)$ for sufficiently large $n$.

In both cases, the Claim holds.
\end{proof}

Let $V(\mathcal{H}_0) = V(\mathcal{H}) \setminus S$ and 
$E(\mathcal{H}_0) = \{ e \setminus S : e \in E(\mathcal{H}), \, |e \cap (V(\mathcal{H}) \setminus S)| \ge 2 \}$.
It suffices to show that
\[
\mathcal{H}_0 \text{ is a Berge-} M_{k+1-\gamma} \text{-free hypergraph.}
\]

Suppose, for the sake of contradiction, that there exists a subgraph 
$\mathcal{H}_0' \subseteq \mathcal{H}_0$ which is a Berge-$M_{k+1-\gamma}$.
By the definition of a Berge-$F$, there exists a bijection
\[
\phi: E(M_{k+1-\gamma}) \to E(\mathcal{H}_0')
\]
such that $e' \subseteq \phi(e')$ for each $e' \in E(M_{k+1-\gamma})$.
For each $e_0 \in E(\mathcal{H}_0)$, by the definition of $\mathcal{H}_0$, there exists $e \in E(\mathcal{H})$ such that $e_0 \subseteq e$.
Hence we can define a bijection
\[
\psi: E(\mathcal{H}_0') \to E(\mathcal{H}')
\]
such that $e_0 \subseteq e = \psi(e_0)$ for each $e_0 \in E(\mathcal{H}_0')$, where $\mathcal{H}'$ is a subgraph of $\mathcal{H}$.
Composing $\phi$ and $\psi$, we obtain a bijection
\[
\psi \circ \phi: E(M_{k+1-\gamma}) \to \psi(E(\mathcal{H}_0'))
\]
such that $e' \subseteq \psi(\phi(e'))$ for each $e' \in E(M_{k+1-\gamma})$.

Note that $\vert V(\mathcal{H}_0') \vert \le r(k+1-\gamma) \le r(k+1)$.
Hence, $\vert V(\mathcal{H}_0') \cap W \vert \le r(k+1)$.
Since $W$ is an independent set, the number of blue $r$-cliques containing $u_1$ and vertices from $V(\mathcal{H}_0') \cap W$, together with the number of blue $r$-cliques in $G[U]$ containing $u_1$, is at most
\[
\vert V(\mathcal{H}_0') \cap W \vert {t \choose r-2} + {t-1 \choose r-1} \le (r(k+1)+1) \cdot {t \choose r-1}.
\]
Denote by $K_r(x,y)$ an $r$-clique containing vertices $x$ and $y$.
By the Claim and the definition of $S$, we can choose a blue $r$-clique $K_r(u_1,w^1)$ corresponding to a hyperedge $\mathcal{G}^r[K_r(u_1,w^1)]$ in $\mathcal{H}$, 
where $w^1 \in W \setminus V(\mathcal{H}^{\prime}_0)$ and $\mathcal{G}^r[K_r(u_1,w^1)]$ is the $r$-clique hypergraph of $G[V(K_r(u_1,w^1))]$.

For $2 \le i \le \gamma$, similarly, the number of blue $r$-cliques containing $u_i$ and vertices from 
$(V(\mathcal{H}_0') \cap W) \cup \bigcup_{j<i} w^j$,
together with the number of blue $r$-cliques in $G[U]$ containing $u_i$, is at most
\[
(\vert V(\mathcal{H}_0') \cap W \vert + (i-1)) {t \choose r-2} + {t-1 \choose r-1} \le (r(k+1) + t) {t \choose r-1}.
\]
We then pick a blue $r$-clique $K_r(u_i,w^i)$ corresponding to a hyperedge $\mathcal{G}^r[K_r(u_i,w^i)]$ in $\mathcal{H}$,
where $w^i \in W \setminus (V(\mathcal{H}_0') \cup \bigcup_{j<i} w^j)$.

Now we define a bijection 
$$\varXi : E(M_{k+1-\gamma})\cup \big(\bigcup_{1\leq i\leq \gamma} u_iw^i\big) \to \psi(E(\mathcal{H}^{\prime}_0))\cup \big(\bigcup_{1\leq i\leq \gamma}\mathcal{G}^r[K_r(u_i,w^i)]\big)$$ such that 
\begin{eqnarray*}
  \varXi(f)=\begin{cases}
    \psi(\phi(f)) & \text{if }  f\in E(M_{k+1-\gamma}),\\
    \mathcal{G}^r[K_r(u_i,w^i)]        & \text{if }  f=u_iw^i.
    \end{cases}
\end{eqnarray*}
Note that 
\[
\psi(E(\mathcal{H}_0')) \cup \bigcup_{1\le i \le \gamma} \mathcal{G}^r[K_r(u_i,w^i)]
\subseteq E(\mathcal{H}).
\]
Thus, $\mathcal{H}$ contains $\mbox{Berge-}M_{k+1}$ as a subgraph, leading to a contradiction. 
Therefore, $\mathcal{H}_0$ must be $\mbox{Berge-}M_{k+1-\vert S\vert}$-free.
\hfill \rule{3mm}{3mm}\par\medskip

{\bf Proof of Corollary \ref{corollary}:}
By Theorem \ref{stabilitytheorem-for-bergematching-sufficiently}, 
there exists a vertex set $S \subseteq V(\mathcal{H})$ with size $k-1 \leq \vert S \vert \leq k$  
such that $\mathcal{H}_0$ is a $\mbox{Berge-}M_{k+1-\vert S \vert}$-free hypergraph, 
where 
\[
V(\mathcal{H}_0) = V(\mathcal{H}) \setminus S, \quad 
E(\mathcal{H}_0) = \{ e \setminus S : e \in E(\mathcal{H}), \ \vert e \cap (V(\mathcal{H}) \setminus S) \vert \ge 2 \}.
\]

If $\vert S \vert = k$, then $\vert E(\mathcal{H}_0) \vert = 0$. 
This implies that $\mathcal{H}$ is a subgraph of $\mathcal{H}^r(n,k,k)$.
Hence, we may assume that $\vert S \vert = k-1$. 
Then $\mathcal{H}_0$ is a $\mbox{Berge-}M_2$-free hypergraph. 
In particular, we must have 
\begin{eqnarray}\label{pf:corollary:eq1}
\vert V(e_i) \setminus V(e_j) \vert \le 1 \quad \text{for any } e_i, e_j \in E(\mathcal{H}_0).
\end{eqnarray}
Otherwise, if 
$\lvert V(e_i) \setminus V(e_j) \rvert \ge 2$ 
for some $e_i, e_j \in E(\mathcal{H}_0)$, 
then these two edges would form a copy of $\mbox{Berge-}M_2$ in $\mathcal{H}_0$, 
a contradiction.

Since $\vert E(\mathcal{H}) \vert > h_r(n,k,k-1)$ and the definition of $\mathcal{H}_0$, it can be readily verified that
\begin{eqnarray}\label{pf:corollary:eq2}
\vert E(\mathcal{H}_0) \vert&=&\vert E(\mathcal{H}) \vert
-\left\vert \{ e : e \in E(\mathcal{H}), \ \vert e \cap (V(\mathcal{H}) \setminus S) \vert =1 \} \right\vert\nonumber\\
&&-\left\vert \{ e: e \in E(\mathcal{H}), \ \vert e \cap (V(\mathcal{H}) \setminus S) \vert =0 \} \right\vert\nonumber\\
&>&h_r(n,k,k-1)-{k-1 \choose r}-(n-k+1){k-1 \choose r-1}\nonumber\\
&=& 3 {k-1 \choose r-2} + {k-1 \choose r-3}.
\end{eqnarray}
It is worth noting that ${k-1 \choose r-3}=0$ if $r=2$.
Let $$V_m=\max\{\vert V(e_i)\vert:e_i\in E(\mathcal{H}_0)\} \text{ and } \vert V(e_m)\vert =V_m.$$

The rest of the proof will be divided into three cases.

\medskip

{\bf Case 1. }  $V_m\geq 4$.

\medskip

Since $\mathcal{H}_0$ is $\mathrm{Berge}\text{-}M_2$-free, no two edges of $\mathcal{H}_0$ can satisfy 
$\lvert V(e_i) \setminus V(e_j) \rvert \ge 2$. 
However, if $V_m \ge 4$, then any two distinct edges with maximum vertex size would violate this condition. 
Thus $\lvert E(\mathcal{H}_0) \rvert = 1$, 
which contradicts \eqref{pf:corollary:eq2}.

\medskip

{\bf Case 2.} $V_m = 3$.

\medskip

It follows from the fact that $\mathcal{H}_0$ is a $\mbox{Berge-}M_2$-free hypergraph that 
$V(e_i) \subseteq V(e_m)$ for any $e_i \in E(\mathcal{H}_0)$. 
Consequently,
\[
E(\mathcal{H}_0) = \{ \rho_0 \{v_1, v_2, v_3\}, \ \rho_1 \{v_2, v_3\}, \ \rho_2 \{v_1, v_3\}, \ \rho_3 \{v_1, v_2\} \},
\]
where all $v_i \in V(\mathcal{H}_0)$ are distinct, and each $\rho_i$ is a constant.
By the definition of $\mathcal{H}_0$, we have 
\[
\rho_0 \le {k-1 \choose r-3}, \quad \rho_1, \rho_2, \rho_3 \le {k-1 \choose r-2}.
\]
Therefore,
\[
\vert E(\mathcal{H}_0) \vert \le 3 {k-1 \choose r-2} + {k-1 \choose r-3},
\]
which is a contradiction to (\ref{pf:corollary:eq2}).

\medskip

{\bf Case 3. }  $V_m=2$.

\medskip

Since (\ref{pf:corollary:eq2}) holds, there exist at least four distinct edges $e_i$ 
such that $e_i \in E(\mathcal{H}_0)$ for $i \in [4]$.
Using (\ref{pf:corollary:eq1}) that $\vert V(e_i) \setminus V(e_j) \vert \le 1$ for any $e_i, e_j \in E(\mathcal{H}_0)$ together with the fact that $\mathcal{H}_0$ is a $\mbox{Berge-}M_{2}$-free hypergraph,  
it is then straightforward to verify that $\mathcal{H}_0$ forms a star.
This implies that $\mathcal{H}$ is a subgraph of $\mathcal{H}^r(n,k,k)$. 

Hence, the proof is complete.
\hfill \rule{3mm}{3mm}\par\medskip

\section*{Declaration of competing interest}
The authors declare that they have no known competing financial interests or personal
relationships that could have appeared to influence the work reported in this paper.

\section*{Data availability}
No data was used for the research described in the article.

\section*{Acknowledgements}
The authors would like to thank Long-Tu Yuan for fruitful discussions.

\end{document}